\numberwithin{equation}{section}
\def\P{{\mathbb P}}
\newtheorem{theorem}{Theorem}[section]
\newtheorem{lemma}[theorem]{Lemma}
\newtheorem{proposition}[theorem]{Proposition}
\newtheorem{corollary}[theorem]{Corollary}
\theoremstyle{definition}
\newtheorem{definition}[theorem]{Definition}
\newtheorem{remark}[theorem]{Remark}
\newtheorem{convention and reminder}[theorem]{Convention and Reminder}
\newtheorem{convention and remark}[theorem]{Convention and Remark}
\newtheorem{definition and remark}[theorem]{Definition and Remark}
\newtheorem{reminders and definition}[theorem]{Reminders and Definition}
\newtheorem{notation and remarks}[theorem]{Notation and Remarks}
\newtheorem{notation and remark}[theorem]{Notation and Remark}
\newtheorem{example}[theorem]{Example}
\newcommand\Ker{\operatorname{\Ker}}
\def\QR{\mathsf{QR}}
\title{On rank $3$ quadratic equations of projective varieties}
\author[E. Park]{Euisung Park}
\email{euisungpark@korea.ac.kr}
\address{Euisung Park, Department of Mathematics, Korea University, Seoul 136-701, Republic of Korea}
\begin{document}

\keywords{rank of quadratic equation, low rank loci, minimal irreducible decomposition, Veronese embedding}
\subjclass[2010]{14E25, 13C05, 14M15, 14A10}

\begin{abstract}
Let $X \subset \P^r$ be a linearly normal variety defined by a very ample line bundle $L$ on a projective variety $X$. Recently it is shown in \cite{HLMP} that there are many cases where $(X,L)$ satisfies property $\textsf{QR} (3)$ in the sense that the homogeneous ideal $I(X,L)$ of $X$ is generated by quadratic polynomials of rank $3$. The locus $\Phi_3 (X,L)$ of rank $3$ quadratic equations of $X$ in $\P \left( I(X,L)_2 \right)$ is a projective algebraic set, and property $\textsf{QR} (3)$ of $(X,L)$ is equivalent to that $\Phi_3 (X)$ is nondegenerate in $\P \left( I(X)_2 \right)$.

In this paper we study geometric structures of $\Phi_3 (X,L)$ such as its minimal irreducible decomposition. Let
\begin{equation*}
\Sigma (X,L) = \{ (A,B) ~|~  A,B \in {\rm Pic}(X),~L = A^2 \otimes B,~h^0 (X,A) \geq 2,~h^0 (X,B) \geq 1 \}.
\end{equation*}
We first construct a projective subvariety $W(A,B) \subset \Phi_3 (X,L)$ for each $(A,B)$ in $\Sigma (X,L)$. Then we prove that the equality
\begin{equation*}
\Phi_3 (X,L) ~=~ \bigcup_{(A,B) \in \Sigma (X,L)} W(A,B)
\end{equation*}
holds when $X$ is locally factorial. Thus this is an irreducible decomposition of $\Phi_3 (X,L)$ when ${\rm Pic} (X)$ is finitely generated and hence $\Sigma(X,L)$ is a finite set. Also we find a condition that the above irreducible decomposition is minimal. For example, it is a minimal irreducible decomposition of $\Phi_3 (X,L)$ if ${\rm Pic}(X)$ is generated by a very ample line bundle.
\end{abstract}

\maketitle \tableofcontents \setcounter{page}{1}

\section{Introduction}

\noindent We work over an algebraically closed field $\mathbb{K}$ of characteristic $\neq 2$. Let $X$ be a projective variety and $L$ a very ample line bundle on $X$ defining the linearly normal embedding
\begin{equation*}
X \subset \mathbb{P}^r ,\quad r = h^0 (X,L) -1.
\end{equation*}
We denote by $I(X,L)$ the homogeneous ideal of $X$ in the homogeneous coordinate ring $S = \mathbb{K}[x_0 , x_1 , \ldots , x_r ]$ of $\mathbb{P}^r$. It is one of the central issue in algebraic geometry to understand the interaction between geometric properties of $(X,L)$ and structural properties of $I(X,L)$. For the last few decades, the problem of giving conditions to guarantee that $I(X,L)$ is generated by quadrics and the first few syzygy modules are generated by linear syzygies have attracted considerable attention (cf. \cite{Gre1}, \cite{Gre2}, \cite{GL}, \cite{EL}, \cite{GP}, \cite{Ina}, etc). Another important direction to study structures of defining equations of $X$ is to examine whether $I(X,L)$ is defined by $2$-minors of one or several linear determinantal presentations (cf. \cite{EKS}, \cite{Pu}, \cite{Ha}, \cite{B}, \cite{SS}, etc). Also, the finer structure of the finite dimensional $\mathbb{K}$-vector space $I(X,L)_2$ of quadratic defining equations of $X$ in $\mathbb{P}^r$ can be studied through the ranks of its members. Recall that the rank of a nonzero homogeneous quadratic polynomial $Q$ in $S$ is the smallest positive integer $k$ such that $Q$ is the sum of $k$ squares of linear polynomials. Due to \cite{HLMP}, we say that $(X,L)$ satisfies property $\textsf{QR} (k)$ if $I(X,L)$ can be generated by quadratic polynomials of rank at most $k$. There are many examples of property $\textsf{QR} (4)$. If $I(X,L)$ is generated by $2$-minors of some linear determinantal presentations then $(X,L)$ satisfies property $\textsf{QR} (4)$. Many classical constructions in projective geometry, such as rational normal scrolls and Segre-Veronese varieties satisfy property $\textsf{QR} (4)$. We refer the reader to \cite{MP} for property $\textsf{QR} (k)$ of several projective varieties. Let $\mathcal{C}$ be a smooth projective curve of genus $g$ and let $\mathcal{L}$ be a line bundle on $\mathcal{C}$ of degree $d$. In \cite{SD}, B. Saint-Donat proved that if $d \geq 2g+2$ then $(\mathcal{C}, \mathcal{L})$ satisfies property $\QR (4)$. When $\mathcal{C}$ is non-hyperelliptic, non-trigonal and not isomorphic to a plane quintic, M. Green in \cite{Gre} reproved the classical Torelli's Theorem by showing that the canonical embedding of $\mathcal{C}$ satisfies property $\QR (4)$. See also \cite{Pe, SD2, ArH}. Recently it is shown there are many cases where $(X,L)$ satisfies property $\textsf{QR} (3)$.

\begin{theorem}[Theorem 1.1 and Theorem 1.3 in \cite{HLMP}, Theorem 1.1 in \cite{PaCurve}]\label{thm:HLMP}
Suppose that ${\rm char}(\mathbb{K}) \neq 2,3$.

\renewcommand{\descriptionlabel}[1]%
             {\hspace{\labelsep}\textrm{#1}}
\begin{description}
\setlength{\labelwidth}{13mm}
\setlength{\labelsep}{1.5mm}
\setlength{\itemindent}{0mm}

\item[$(1)$] $(\P^n , \mathcal{O}_{\P^n} (d))$ satisfies property $\QR (3)$ for any $n \geq 1$ and $d \geq 2$.

\item[$(2)$] Let $A$ be a very ample line bundle on a projective scheme $X$ defining the linearly normal
embedding
$$X \subset \P H^0 (X,A).$$
If $m$ is an integer such that $X$ is $j$-normal for all $j \geq m$ and $I(X,A)$ is generated by forms of degree $\leq m$, then $(X,A^d )$ satisfies property $\QR (3)$ for all $d \geq m$.

\item[$(3)$] Let $\mathcal{C}$ be a projective integral curve of arithmetic genus $g$ and $\mathcal{L}$ a line bundle on $\mathcal{C}$ of degree $d$. If $d \geq 4g+4$, then $(\mathcal{C},\mathcal{L})$ satisfies property $\QR (3)$.
\end{description}
\end{theorem}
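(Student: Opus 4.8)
The plan is to reduce all three statements to one common problem and then treat the cases separately. By the criterion recalled in the abstract, $(X,L)$ satisfies $\QR(3)$ precisely when the rank $\le 3$ quadrics of $I(X,L)_2$ span it as a $\kk$-vector space, so everything comes down to exhibiting enough low-rank quadrics. Since $\mathrm{char}\,\kk\neq 2$, a nonzero quadratic form has rank $\le 3$ if and only if it can be written as $\ell_0\ell_2-\ell_1^2$ for three linear forms; identifying the linear forms on $\P H^0(X,L)$ with sections of $L$, the form $q(s_0,s_1,s_2):=s_0\cdot s_2-s_1\cdot s_1\in\mathrm{Sym}^2H^0(X,L)=S_2$ lies in $I(X,L)_2$ exactly when $s_0s_2=s_1^2$ in $H^0(X,L^2)$. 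A uniform supply of such triples is the following: if $L\cong A^2\otimes B$ with $A,B\in{\rm Pic}(X)$ and $a_0,a_1\in H^0(X,A)$, $b\in H^0(X,B)$, then $s_0=a_0^2b$, $s_1=a_0a_1b$, $s_2=a_1^2b$ satisfy $s_0s_2=a_0^2a_1^2b^2=s_1^2$, so $q(a_0^2b,a_0a_1b,a_1^2b)\in I(X,L)_2$ has rank $\le 3$; letting $(a_0,a_1,b)$ run over $H^0(X,A)^2\times H^0(X,B)$ produces a subvariety $W(A,B)\subset\Phi_3(X,L)$, and $(A,B)\in\Sigma(X,L)$ (i.e.\ $h^0(A)\ge2$, $h^0(B)\ge1$) is exactly the requirement under which this construction is non-trivial. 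Thus the whole task is to choose, in each of the three situations, enough pairs $(A,B)\in\Sigma(X,L)$ so that the corresponding $W(A,B)$'s jointly span $I(X,L)_2$.

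For $(1)$, with $V=H^0(\P^n,\mathcal O_{\P^n}(1))$, the multiplication map identifies $I(v_d(\P^n))_2$ with $\ker\bigl(\mathrm{Sym}^2\mathrm{Sym}^dV\to\mathrm{Sym}^{2d}V\bigr)$, and the classical plethysm $\mathrm{Sym}^2\mathrm{Sym}^dV=\bigoplus_{j=0}^{\lfloor d/2\rfloor}\mathbb S_{(2d-2j,2j)}V$ shows this to be the multiplicity-free $GL(V)$-module $\bigoplus_{j=1}^{\lfloor d/2\rfloor}\mathbb S_{(2d-2j,2j)}V$. Taking $A=\mathcal O_{\P^n}(j)$ and $B=\mathcal O_{\P^n}(d-2j)$ in the construction above gives the rank $\le 3$ quadrics $q(f^2h,fgh,g^2h)$ with $f,g$ of degree $j$ and $h$ of degree $d-2j$; their $\kk$-span is a $GL(V)$-submodule of $I(v_d(\P^n))_2$, and by multiplicity-freeness it is enough to check that it meets every summand $\mathbb S_{(2d-2j,2j)}V$, which I would do by exhibiting, for each $j$, such a quadric whose highest-weight component (for the weight $(2d-2j,2j,0,\dots,0)$) is nonzero. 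The starting cases are easy: for $d=2$ the module $I(v_2(\P^n))_2=\mathbb S_{(2,2)}V$ is irreducible and the quadrics $q(\lambda^2,\lambda\mu,\mu^2)$ are nonzero, so they span; for $n=1$ it is a finite computation with $\mathrm{SL}_2$-representations; and the general case reduces to $n=1$ by restricting to lines $\P^1\subset\P^n$, along which $v_d$ becomes a rational normal curve and the rank of a quadric can only decrease. (There is also a purely combinatorial route: each binomial generator $x_\alpha x_\beta-x_\gamma x_\delta$ of $I(v_d(\P^n))$ with $\alpha+\beta$ componentwise even equals $(x_\alpha x_\beta-x_\epsilon^2)-(x_\gamma x_\delta-x_\epsilon^2)$ for $2\epsilon=\alpha+\beta$, a difference of rank $\le 3$ quadrics, and the remaining binomials are handled by a short telescoping that removes the odd coordinates of $\alpha+\beta$ two at a time.)

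For $(2)$ the hypotheses are exactly what is needed to realize $(X,A^d)$ as a linear section of a Veronese, so that $(1)$ applies. Put $\P^N=\P H^0(X,A)$, so $X\subset\P^N$, let $\P^r=\P H^0(X,A^d)$ be the ambient space of $(X,A^d)$, and let $v_d(\P^N)\subset\P^M$ be the $d$-uple Veronese. Since $d\ge m$, the scheme $X$ is $dk$-normal for every $k\ge1$, so $v_d(X)\subset\P^r$ is projectively normal and $\P^r$ is precisely the linear span of $v_d(X)$ inside $\P^M$, cut out by the subspace $I(X,A)_d\subset H^0(\P^N,\mathcal O(d))=H^0(\P^M,\mathcal O(1))$. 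Because $I(X,A)$ is generated in degrees $\le m\le d$, the ideal generated by $I(X,A)_d$ coincides with $I(X,A)$ in all degrees $\ge d$, so $V(I(X,A)_d)=X$ scheme-theoretically; transporting this through the isomorphism $v_d\colon\P^N\xrightarrow{\sim}v_d(\P^N)$ gives $v_d(X)=v_d(\P^N)\cap\P^r$ as schemes. Consequently the restriction homomorphism $\rho\colon S_M\twoheadrightarrow S_r$ onto the coordinate ring of $\P^r$ carries $I(v_d(\P^N))$ onto $I(v_d(X),\P^r)$; applying $\rho$ to the rank $\le 3$ generators of $I(v_d(\P^N))$ supplied by $(1)$ yields rank $\le 3$ generators of $I(v_d(X),\P^r)$ — restriction to a linear subspace can only lower the rank of a quadratic form — so $(X,A^d)$ satisfies $\QR(3)$.

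For $(3)$ the bound $d\ge 4g+4$ already forces $\mathcal C\subset\P H^0(\mathcal C,\mathcal L)$ to be projectively normal with quadratic ideal, so again it suffices to span $I(\mathcal C,\mathcal L)_2$ by rank $\le 3$ quadrics. I would choose a base-point-free pencil $A=\langle s_0,s_1\rangle$ of degree about $g+1$ (so $h^0(A)=2$); then $B$ defined by $\mathcal L\cong A^2\otimes B$ has degree $d-2\deg A\ge 2g+2$, whence $h^0(B)\ge1$, $h^1(B)=0$ and $(A,B)\in\Sigma(\mathcal C,\mathcal L)$. The base-point-free pencil trick (using $H^1(B)=0$) gives $H^0(\mathcal L)=s_0H^0(A\otimes B)+s_1H^0(A\otimes B)$, which lets one rewrite $\mathrm{Sym}^2H^0(\mathcal L)$, and with it $I(\mathcal C,\mathcal L)_2$, entirely in terms of the pencil; one then checks that $I(\mathcal C,\mathcal L)_2$ is spanned by the quadrics $q(s_0^2t,s_0s_1t,s_1^2t)$ with $t\in H^0(B)$ belonging to $W(A,B)$, together with their analogues as $A$ varies, the inequality $d\ge 4g+4$ being precisely what guarantees the cohomological vanishings that make this reduction go through. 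In all three parts the genuine content — and the only place where $\mathrm{char}\,\kk\neq 2,3$ and the numerical hypotheses are really used — is this final spanning step; of the three, $(2)$ is the one where it is essentially free, inherited from $(1)$ via a linear projection that cannot raise the rank of a quadric.
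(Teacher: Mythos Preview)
This theorem is not proved in the paper: it is quoted, with attribution, from \cite{HLMP} and \cite{PaCurve}, and used as background. So there is no in-paper argument to compare against; the only question is whether your sketch stands on its own. Your framework is exactly the one the paper adopts --- the rank-$3$ quadrics $q(a_0^2b,a_0a_1b,a_1^2b)$ are precisely the elements $Q_{A,B}(a_0,a_1,b)$ of $W(A,B)$ from Section~2, and the goal is indeed to show these span $I(X,L)_2$ --- but the sketch has genuine gaps where the work actually lives.

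In (2), the step ``$v_d(X)=v_d(\P^N)\cap\P^r$ scheme-theoretically, hence $\rho$ carries $I(v_d(\P^N))$ onto $I(v_d(X))$'' is not valid as written: equality of ideal sheaves does not give equality of homogeneous ideals. What rescues it is that $v_d(\P^N)$ is projectively normal and, since $2d\ge m$, the map $H^0(\P^N,\mathcal O(2d))\to H^0(X,A^{2d})$ is surjective; comparing the two exact sequences $0\to I_2\to S_2\to H^0(\mathcal O(2))\to 0$ then forces $I(v_d(\P^N))_2\twoheadrightarrow I(v_d(X))_2$. You should say this, and you also still owe the fact that $I(v_d(X))$ is generated in degree $2$. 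In (3) the gap is larger: the sentence ``one then checks that $I(\mathcal C,\mathcal L)_2$ is spanned by the quadrics $q(s_0^2t,s_0s_1t,s_1^2t)$ \dots\ together with their analogues as $A$ varies'' is the entire content of the theorem, and you have only asserted it. The base-point-free pencil trick gives a decomposition of $H^0(\mathcal L)$, but converting that into control of $\ker\bigl(\mathrm{Sym}^2H^0(\mathcal L)\to H^0(\mathcal L^2)\bigr)$ is a real argument, and it is precisely here that the bound $d\ge 4g+4$ (stronger than what is needed for projective normality or quadratic generation) does its work. In (1) your plethysm reduction is correct and the combinatorial alternative is close to a proof, but ``a short telescoping that removes the odd coordinates two at a time'' is the nontrivial step and is not written; as it stands, (1) is a believable outline, (2) is nearly complete modulo the normality argument, and (3) is a plan rather than a proof.
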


For each $1 \leq k \leq r$, let $\Phi_k (X,L)$ be the locus of all nonzero quadratic polynomials of rank at most $k$ in the projective space $\mathbb{P} (I(X,L)_2 )$. That is,
\begin{equation*}
\Phi_k (X,L) := \{ [Q] ~|~ Q \in I(X,L)_2 - \{0\},~ {\rm rank}(Q) \leq k \} \subset \mathbb{P} (I(X,L)_2 ).
\end{equation*}
Then $\Phi_k (X,L)$ is a projective algebraic set. Indeed, if $\{Q_0 , Q_1 , \ldots , Q_m \}$ is a basis of $I(X,L)_2$ then every member $Q$ of $I(X,L)_2$ is of the form
\begin{equation*}
Q = y_0 Q_0 + y_1 Q_1 + \cdots + y_m Q_m ~ \mbox{for some} ~ y_0 , y_1 , \ldots , y_m \in \mathbb{K}.
\end{equation*}
Now, let $M$ be the $(r+1) \times (r+1)$ symmetric matrix associated to $Q$. Thus its entries are linear forms in $y_0 , y_1 , \ldots , y_m$. If we identify the projective space $\mathbb{P} (I(X,L)_2 )$ with $\P^m$ by sending $[Q]$ to $[y_0 : y_1 : \cdots :y_m ]$, then $\Phi_k (X,L)$ is the common zero set of all $(k+1) \times (k+1)$ minors of $M$ in $\P^m$. In particular, $\Phi_k (X,L)$ is a projective algebraic set. There is an ascending filtration associated to $X \subset \P^r$:
\begin{equation*}
\emptyset = \Phi_2 (X,L) \subset \Phi_3 (X,L) \subset \cdots \subset \Phi_r (X,L) = V ({\mbox det}(M)) \subset  \P^m
\end{equation*}
Here, $\Phi_2 (X,L)$ is empty since $X$ is irreducible and nondegenerate in $\P^r$. Note that $I(X,L)_2$ can be generated by quadratic polynomials of rank at most $k$ if and only if $\Phi_k (X,L)$ is nondegenerate in $\P^m$. \\

The primary goal of this paper is to elucidate the structure of the projective algebraic set $\Phi_3 (X,L)$ such as its minimal irreducible decomposition, i.e. its representation as the union of projective irreducible varieties having no inclusion relation to each other, the dimensions and the degrees of its irreducible components, etc.

\begin{theorem}\label{thm:main1}
Let $(X,L)$ be as above and let
\begin{equation*}
\Sigma (X,L) = \{ (A,B) ~|~  A,B \in {\rm Pic}(X),~L = A^2 \otimes B,~h^0 (X,A) \geq 2,~h^0 (X,B) \geq 1 \}.
\end{equation*}
Then
\smallskip

\renewcommand{\descriptionlabel}[1]%
             {\hspace{\labelsep}\textrm{#1}}
\begin{description}
\setlength{\labelwidth}{13mm}
\setlength{\labelsep}{1.5mm}
\setlength{\itemindent}{0mm}

\item[{\rm (1)}] For each $(A,B) \in \Sigma (X,L)$, there is a projective subvariety $W(A,B) \subset \Phi_3 (X,L)$ which admits a finite surjective morphism
\begin{equation*}
\widetilde{Q_{A,B}} : \mathbb{G} (1,\P^p ) \times \P^q \rightarrow W(A,B)
\end{equation*}
where $p = h^0 (X,A)-1$ and $q= h^0 (X,B)-1$. Thus $\dim~W(A,B) = 2p + q -2$.
\item[{\rm (2)}] Suppose that $X$ is locally factorial. Then $\Phi_3 (X,L)$ is decomposed as
\begin{equation}\label{eq:irr decomp}
\Phi_3 (X,L) ~=~ \bigcup_{(A,B) \in \Sigma (X,L)} W(A,B)
\end{equation}
Moreover, this is an irreducible decomposition of $\Phi_3 (X,L)$ if ${\rm Pic}(X)$ is finitely generated.
\end{description}
\end{theorem}

See section 2 for the precise construction of $W(A,B)$ and for the proof of Theorem \ref{thm:main1}.(1). Also see Proposition \ref{prop:decomp by irreducibles} for the proof of Theorem \ref{thm:main1}.(2).

\begin{remark}
In Theorem \ref{thm:main1}.(2), there are the conditions that $X$ is locally factorial and that ${\rm Pic}(X)$ is finitely generated. The former condition means that all Weil divisors of $X$ are Cartier. In Proposition \ref{prop:decomp by irreducibles} we prove that if $X$ is locally factorial then every rank $3$ quadratic polynomial in $I(X,L)$ is contained in $W(A,B)$ for some $(A,B) \in \Sigma (X,L)$. Also the latter condition implies that $\Sigma (X,L)$ is a finite set and hence the right hand side of (\ref{eq:irr decomp}) is a finite union.
\end{remark}

Perhaps the most powerful application of Theorem \ref{thm:main1} is to the case where ${\rm Pic}(X)$ is generated by a very ample line bundle, say $A$. In such a case, $\Sigma (X,A)$ is empty and hence Theorem \ref{thm:main1} says that $X \subset \P H^0 (X,A)$ satisfies no quadratic polynomials of rank $3$. Also since
$$\Sigma (X,A^2) = \{ (A, \mathcal{O}_X ) \} \quad \mbox{and} \quad \Sigma (X,A^3) = \{ (A, A) \},$$
it follows by Theorem \ref{thm:main1} that $\Phi_3 (X,A^2 ) = W(A,\mathcal{O}_X )$ and $\Phi_3 (X,A^3 ) = W(A,A)$ are projective varieties of dimensions $2h^0 (X,A)-4$ and $3h^0 (X,A)-5$, respectively.

When $(X,L) = (\P^n ,\mathcal{O}_{\P^n} (2))$ we obtain the following much simpler structure of $\Phi_3$.

\begin{corollary}\label{cor:second Veronese embedding}
Suppose that $\mbox{char}(\mathbb{K}) \neq 2,3$. Then $\Phi_3 (\P^n ,\mathcal{O}_{\P^n} (2))$ is projectively equivalent to the second Veronese embedding of the Pl\"{u}cker embedding $\mathbb{G} (1,\P^n ) \subset \P^{{{n+1} \choose {2}} -1} $.
\end{corollary}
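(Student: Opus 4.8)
The plan is to reduce, via Theorem \ref{thm:main1}, to a single piece $W(\mathcal{O}_{\P^n}(1),\mathcal{O}_{\P^n})$, and then to identify the morphism parametrizing it with the second Veronese re-embedding of the Pl\"ucker-embedded Grassmannian. Set $V=H^0(\P^n,\mathcal{O}_{\P^n}(1))$, so $\dim V=n+1$ and $H^0(\P^n,\mathcal{O}_{\P^n}(d))=\operatorname{Sym}^d V$. Since $\operatorname{Pic}(\P^n)=\Z\langle\mathcal{O}_{\P^n}(1)\rangle$ with $\mathcal{O}_{\P^n}(1)$ very ample, writing $A=\mathcal{O}(a)$, $B=\mathcal{O}(b)$ and solving $2a+b=2$, $a\geq 1$, $b\geq 0$ shows that the only admissible pair is $(A,B)=(\mathcal{O}(1),\mathcal{O}_{\P^n})$; hence $\Sigma(\P^n,\mathcal{O}(2))=\{(\mathcal{O}(1),\mathcal{O}_{\P^n})\}$. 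As $\P^n$ is smooth, hence locally factorial, Theorem \ref{thm:main1} gives $\Phi_3(\P^n,\mathcal{O}(2))=W(\mathcal{O}(1),\mathcal{O}_{\P^n})$ together with a finite surjective morphism $\tilde{Q}:\G(1,\P^n)\times\P^0=\G(1,\P^n)\to\Phi_3(\P^n,\mathcal{O}(2))$ (here $p=n$, $q=0$). By the construction of $W(A,B)$ recalled in Section 2, for a pencil $\ell=\langle a_0,a_1\rangle\subseteq V$ the point $\tilde{Q}(\ell)$ is represented by a rank-$3$ quadratic polynomial of the shape $uv-w^2$ with $u=a_0^2$, $v=a_1^2$, $w=a_0a_1\in\operatorname{Sym}^2 V=H^0(\P^n,\mathcal{O}(2))$ regarded as linear forms on the ambient space of $\Phi_3(\P^n,\mathcal{O}(2))=\P\bigl(I(\P^n,\mathcal{O}(2))_2\bigr)$; here $I(\P^n,\mathcal{O}(2))_2=\ker\bigl(\operatorname{Sym}^2(\operatorname{Sym}^2 V)\to\operatorname{Sym}^4 V\bigr)$, and the whole assignment $\ell\mapsto\tilde{Q}(\ell)$ is manifestly $\operatorname{GL}(V)$-equivariant.

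Next I would show that $\tilde{Q}$ factors through the composite
\[
\G(1,\P^n)=\G(2,V)\xrightarrow{\ \text{Pl\"ucker}\ }\P(\Lambda^2 V)\xrightarrow{\ v_2\ }\P\bigl(\operatorname{Sym}^2(\Lambda^2 V)\bigr).
\]
Write $\omega=a_0\wedge a_1\in\Lambda^2 V$ for the Pl\"ucker point of $\ell$, so that $v_2(\text{Pl\"ucker}(\ell))=[\omega\otimes\omega]$. The point is that each coordinate of $Q_\ell:=uv-w^2\in\operatorname{Sym}^2(\operatorname{Sym}^2 V)$ is a \emph{linear} function of the coordinates of $\omega\otimes\omega$; equivalently, there is a $\operatorname{GL}(V)$-equivariant linear map $\psi$ from the linear span $U:=\langle\,v_2(\text{Pl\"ucker}(\G(1,\P^n)))\,\rangle\subseteq\operatorname{Sym}^2(\Lambda^2 V)$ to $I(\P^n,\mathcal{O}(2))_2$ with $\psi(\omega\otimes\omega)=Q_\ell$. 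This comes down to the observation that, after replacing $(a_0,a_1)$ by a basis of $\langle a_0,a_1\rangle$ of determinant $1$, the form $(a_0^2)(a_1^2)-(a_0a_1)^2$ is unchanged: it is a scalar multiple of the unique $\operatorname{SL}_2$-invariant quadratic form on the three-dimensional module $\operatorname{Sym}^2\langle a_0,a_1\rangle$. Thus $Q_\ell$ depends on $(a_0,a_1)$ only through $\omega$, and quadratically (it scales by $\lambda^2$ when $\omega$ scales by $\lambda$), whence linearly through $\omega\otimes\omega$. Since $v_2\circ\text{Pl\"ucker}$ is a closed immersion, it follows that $\tilde{Q}=\P(\psi)\circ(v_2\circ\text{Pl\"ucker})$, so $\Phi_3(\P^n,\mathcal{O}(2))$ is the image of the second Veronese of the Pl\"ucker-embedded Grassmannian under the linear map $\P(\psi):\P(U)\to\P(I_2)$.

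It remains to check that $\P(\psi)$ is a projective equivalence. Here I would invoke the representation theory of $\operatorname{GL}(V)$, valid in characteristic $\neq 2,3$: the plethysm decompositions $\operatorname{Sym}^2(\operatorname{Sym}^2 V)=\operatorname{Sym}^4 V\oplus\mathbb{S}_{(2,2)}V$ and $\operatorname{Sym}^2(\Lambda^2 V)=\mathbb{S}_{(2,2)}V\oplus\Lambda^4 V$ into pairwise non-isomorphic irreducibles. Then $I(\P^n,\mathcal{O}(2))_2$, being the kernel of the multiplication $\operatorname{Sym}^2(\operatorname{Sym}^2 V)\to\operatorname{Sym}^4 V$, is the irreducible $\mathbb{S}_{(2,2)}V$; and since the degree-$2$ part of the homogeneous ideal of the Pl\"ucker-embedded $\G(1,\P^n)$ is spanned by the Pl\"ucker relations, which constitute the summand $\Lambda^4 V$ of $\operatorname{Sym}^2(\Lambda^2 V)$ (the Grassmannian being projectively normal), the span $U$ is the complementary irreducible $\mathbb{S}_{(2,2)}V$. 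Hence $\psi$ is a nonzero ($Q_\ell\neq0$ for every pencil $\ell$) $\operatorname{GL}(V)$-equivariant map between isomorphic irreducible modules, so it is an isomorphism by Schur's lemma, and $\P(\psi)$ carries the second Veronese of the Pl\"ucker embedding isomorphically onto $\Phi_3(\P^n,\mathcal{O}(2))$. (If one prefers to avoid the plethysm identities: a coordinate computation gives $\tilde{Q}^*\mathcal{O}_{\P(I_2)}(1)=\mathcal{O}_{\G(1,\P^n)}(2)$, so $\tilde{Q}$ is defined by a subsystem of $|\mathcal{O}_{\G}(2)|$; this subsystem is complete because $\Phi_3(\P^n,\mathcal{O}(2))$ is nondegenerate in $\P(I_2)$ by Theorem \ref{thm:HLMP}(1) while $\dim I_2 = h^0(\G(1,\P^n),\mathcal{O}_{\G}(2))$, both equal to $\tfrac1{12}(n+1)^2n(n+2)$, and the complete-$|\mathcal{O}_{\G}(2)|$ embedding is by definition the second Veronese of the Pl\"ucker embedding.)

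The main obstacle is the factorization of the second paragraph, namely verifying at the level of coordinates that the rank-$3$ quadric $Q_\ell$ depends on the pair $(a_0,a_1)$ only through $a_0\wedge a_1$ and linearly through $(a_0\wedge a_1)^{\otimes 2}$ — i.e. the $\operatorname{SL}_2$-invariance of $(a_0^2)(a_1^2)-(a_0a_1)^2$. Once this is in hand, the rest is assembly of standard facts (the Pl\"ucker relations generate $I(\G(1,\P^n))_2$ and form the summand $\Lambda^4 V$, projective normality of the Grassmannian, and the two plethysm decompositions), all available in characteristic $\neq 2,3$, which is precisely the hypothesis of the Corollary.
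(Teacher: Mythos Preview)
Your parenthetical alternative is exactly the paper's argument (stated there as Theorem~\ref{thm:2nd Veronese}): after reducing to the single piece $W(\mathcal{O}_{\P^n}(1),\mathcal{O}_{\P^n})$, the paper uses Theorem~\ref{thm:main1 rank3 case} to see that $\tilde Q$ is defined by a linear subsystem of $|\mathcal{O}_{\mathbb{G}(1,\P^n)}(2)|$, invokes nondegeneracy of $\Phi_3(V_{n,2})$ from \cite{HLMP}, and matches $h^0(\mathbb{G}(1,\P^n),\mathcal{O}(2))=\dim I(V_{n,2})_2$ to conclude that the subsystem is complete. Your primary route---identifying both the linear span $U$ of the re-embedded Grassmannian and the target $I(V_{n,2})_2$ with the irreducible $\operatorname{GL}(V)$-module $\mathbb{S}_{(2,2)}V$ via the two plethysm decompositions, and then applying Schur's lemma to the nonzero equivariant map $\psi$---is a genuinely different and more structural argument: it explains the dimension coincidence intrinsically and, notably, does not need the external $\QR(3)$ input from \cite{HLMP}. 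The trade-off is that the plethysm splittings and the irreducibility of $\mathbb{S}_{(2,2)}V$ over $\operatorname{GL}(V)$ in positive characteristic require more justification than the phrase ``valid in characteristic $\neq 2,3$'' conveys; the paper's dimension-count route sidesteps representation theory entirely. Your factorization step---that $Q_\ell$ depends on $(a_0,a_1)$ only through $a_0\wedge a_1$, and quadratically---is precisely Lemma~\ref{lem:basic properties}(3) combined with the invariant-theoretic description in Notation and Remarks~\ref{no and rks:Grassmannian}(1), exactly as in the paper's proof of Theorem~\ref{thm:main1 rank3 case}.
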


This result implies that $\Phi_3 (\P^2 ,\mathcal{O}_{\P^2} (2))$ is projectively equivalent to the Veronese surface $\nu_2 (\P^2 ) \subset \P^5$. In \cite[Theorem 1.1]{PaSMD}, there is a computational proof of this result by using Macaulay2. Thus Corollary \ref{cor:second Veronese embedding} enables us to understand this fact theoretically.\\

The next natural question is when is the irreducible decomposition of $\Phi_3 (X,L)$ in Theorem \ref{thm:main1}.(2) minimal? To answer for this question, we begin with the following definition.

\begin{definition}\label{def:irreducible}
Suppose that $X$ is locally factorial and $\mbox{Pic}(X)$ is finitely generated. Let
\begin{equation*}
\mbox{Cox}(X) = \bigoplus_{\mathcal{L} \in {\rm Pic}(X)} H^0 (X,\mathcal{L})
\end{equation*}
be the Cox ring of $X$. Let $(X,L)$ be as in Theorem \ref{thm:main1}. We say that an element $(A,B) \in \Sigma (X,L)$ is \textit{irreducible} if a general member of $H^0 (X,A)$ is irreducible in $\mbox{Cox}(X)$ and if either $B = \mathcal{O}_X$ or else a general member of $H^0 (X,B)$ is irreducible in $\mbox{Cox}(X)$.
\end{definition}

Recall that $\mbox{Cox}(X)$ is a unique factorization domain (cf. \cite{Ar}, \cite{BH} or \cite{EKW}).

\begin{theorem}\label{thm:main2}
Let $(X,L)$ be as in Definition \ref{def:irreducible}. If $(A,B) \in \Sigma (X,L)$ is irreducible then $W(A,B)$ is an irreducible component of $\Phi_3 (X,L)$. Therefore the irreducible decomposition of $\Phi_3 (X,L)$ in Theorem \ref{thm:main1}.(2) is minimal if every $(A,B) \in \Sigma (X,L)$ is irreducible.
\end{theorem}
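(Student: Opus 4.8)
The plan is to show that for an irreducible $(A,B)\in\Sigma(X,L)$ the subvariety $W(A,B)$ cannot be contained in the union of the other $W(A',B')$'s, and since by Theorem~\ref{thm:main1}.(2) the $W(A',B')$'s are all irreducible and cover $\Phi_3(X,L)$, this forces $W(A,B)$ to be a maximal irreducible component. Because $\Sigma(X,L)$ is finite (Pic$(X)$ being finitely generated) and each $W(A',B')$ is irreducible, it suffices to prove that $W(A,B)\not\subseteq W(A',B')$ for every $(A',B')\ne (A,B)$ in $\Sigma(X,L)$. I would argue this by taking a sufficiently general point of $W(A,B)$ — the image under $\tilde Q_{A,B}$ of a general pair (line, point) in $\mathbb{G}(1,\P^p)\times\P^q$ — and showing that the rank $3$ quadratic equation it represents "remembers" the factorization data $(A,B)$, so it cannot simultaneously lie in $W(A',B')$.

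Concretely, a point of $W(A,B)$ is (up to scalar) a quadric $Q$ of rank $3$, and by the construction in section~2 it factors, via the multiplication maps $H^0(A)\otimes H^0(A)\to H^0(A^2)$ and $H^0(A^2)\otimes H^0(B)\to H^0(L)$, as something of the shape $Q = (s_0 t_1 - s_1 t_0)\cdot u$ where $s_0,s_1\in H^0(X,A)$ span a pencil, $t_0,t_1$ are their images under the Veronese-type map, and $u\in H^0(X,B)$; here I am using that rank exactly $3$ quadrics are, after a linear change of coordinates, of the form $z_0 z_2 - z_1^2$ or equivalently a product of two linear forms modulo a square, i.e. the $2\times 2$ minor structure. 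The key point is that the three linear forms cutting out this rank $3$ quadric, pulled back to $X$, are sections of $L$ that all lie in the image of $H^0(A)\cdot H^0(A)\cdot H^0(B)$ and in fact exhibit $L$ as (essentially) $A^2\otimes B$. Since $\mathrm{Cox}(X)$ is a UFD and $(A,B)$ is irreducible, a general such $s_i$ is a prime element and a general $u$ (when $B\ne\mathcal O_X$) is prime; thus the factorization of $Q$ as an element of $\mathrm{Cox}(X)$ into irreducibles is, up to units and reordering, uniquely $s_0,s_1$ (or the degenerations thereof on the line) together with $u$. If $W(A,B)\subseteq W(A',B')$, then this same general $Q$ would also be expressible as $(s_0' t_1' - s_1' t_0')u'$ with $s_i'\in H^0(X,A')$, $u'\in H^0(X,B')$, and comparing the prime factorizations in $\mathrm{Cox}(X)$ would force $A'$ (the degree of the $s_i'$) and $B'$ to be determined by the degrees of the prime factors of $Q$, giving $A'=A$ and $B'=B$, a contradiction.

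The main obstacle I anticipate is the bookkeeping around degenerate members of the pencil and the case $B=\mathcal O_X$: on the line in $\mathbb{G}(1,\P^p)$ the "general" section $s_0 t_1 - s_1 t_0$ genuinely factors as a product of two sections of $A$, but at special points of the line one of the factors may become reducible or the quadric may drop rank, so I must choose the general point of $W(A,B)$ carefully — using that $\tilde Q_{A,B}$ is finite and surjective, a general point of $W(A,B)$ is the image of a general point of $\mathbb{G}(1,\P^p)\times\P^q$, and for a general point of the Grassmannian the corresponding pencil contains a general (hence prime, by irreducibility of $(A,B)$) section, so the rank of $Q$ is exactly $3$ and its prime factorization behaves as expected. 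A second, more delicate point is ruling out "accidental" coincidences: a priori a general rank $3$ quadric arising from $(A,B)$ might also arise from $(A',B')$ not through the obvious factorization but through a different rank $3$ presentation $z_0 z_2 - z_1^2$; here one uses that the set of linear forms on $\P^r$ vanishing on the conic defined by a rank $3$ quadric is an intrinsic $3$-dimensional space, pulling back to a well-defined $3$-dimensional subspace of $H^0(X,L)$, and the UFD structure pins down which line bundles $A',B'$ can produce it. Once these two points are handled, the conclusion that $W(A,B)$ is an irreducible component — and hence that the decomposition is minimal when all of $\Sigma(X,L)$ is irreducible — follows immediately, since none of the finitely many other $W(A',B')$ can contain it and the $W$'s are each irreducible.
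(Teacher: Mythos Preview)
Your strategic outline is correct and matches the paper's: pick a general point of $W(A,B)$ and show it lies in no $W(C,D)$ with $(C,D)\neq(A,B)$, whence $W(A,B)$ is maximal among the irreducible pieces. The gap is in the execution of the UFD step. The quadric $Q=Q_{A,B}(s,t,h)\in I(X,L)_2$ is \emph{not} an element of $\mathrm{Cox}(X)$ --- its restriction to $X$ is identically zero --- so the sentence ``the factorization of $Q$ as an element of $\mathrm{Cox}(X)$ into irreducibles is \ldots uniquely $s_0,s_1$ together with $u$'' has no meaning, and the displayed shape $Q=(s_0t_1-s_1t_0)\cdot u$ is not a valid identity in any ring. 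What \emph{does} live in $\mathrm{Cox}(X)$ is the triple of sections $s^2h,\,sth,\,t^2h\in H^0(X,L)$ obtained by pulling back the linear forms in a presentation $Q=xz-y^2$. The difficulty you flag as ``accidental coincidences'' is the whole problem: this triple is far from unique, since e.g. $xz-y^2=(x+2\lambda y+\lambda^2 z)z-(y+\lambda z)^2$ for every $\lambda$, so one cannot simply read off prime factors.

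The paper's Proposition~\ref{prop:distinguishing} handles this by writing $Q_{A,B}(s,t,h)=Q_{C,D}(u,v,w)$ and studying the subspace $V=\langle \varphi(s^2h),\varphi(t^2h),\varphi(u^2w),\varphi(v^2w)\rangle\subset S_1$, which has dimension $2$ or $3$ since the difference $\varphi(s^2h)\varphi(t^2h)-\varphi(u^2w)\varphi(v^2w)$ has rank $\le 2$. In the $2$-dimensional case one gets $\langle s^2h,t^2h\rangle=\langle u^2w,v^2w\rangle$ in $H^0(X,L)$ and a $\gcd$ argument in $\mathrm{Cox}(X)$ finishes. In the $3$-dimensional case a short linear-algebra computation produces explicit identities such as $(\lambda s+t)^2h=(a+\lambda^2 b)u^2w$ and $s(au+\lambda v)=t(\lambda bu+v)$ in $\mathrm{Cox}(X)$; irreducibility of $s,t$ (and $h$ when $B\neq\mathcal{O}_X$) then forces divisibility relations that pin down $(C,D)=(A,B)$. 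Your sketch does not supply these identities or any substitute for them, and without them the passage from ``the $3$-space is intrinsic'' to ``the pair $(A,B)$ is determined'' is the missing idea.
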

\smallskip

Again the most powerful application of Theorem \ref{thm:main2} is to the case where ${\rm Pic}(X)$ is generated by a very ample line bundle, say $A$.

\begin{corollary}\label{cor:application Pic Z case}
Suppose that $X$ is a locally factorial variety of dimension $\geq 2$ and ${\rm Pic}(X)$ is generated by a very ample line bundle, say $A$. Then for every $d \geq 2$, the minimal irreducible decomposition of $\Phi_3 (X,A^d)$ is
\begin{equation*}
\Phi_3 (X,A^d ) = \bigcup_{1 \leq \ell \leq d/2} W(A^{\ell},A^{d-2\ell} ).
\end{equation*}
\end{corollary}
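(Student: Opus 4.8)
The plan is to deduce the corollary from Theorems \ref{thm:main1} and \ref{thm:main2}; the two tasks are to determine $\Sigma(X,A^{d})$ explicitly and to verify that each of its members is irreducible in the sense of Definition \ref{def:irreducible}. Since ${\rm Pic}(X)=\Z\cdot A$ is torsion free, every pair $(M,N)$ with $M^{2}\otimes N=A^{d}$ is of the form $M=A^{a}$, $N=A^{d-2a}$ for a unique $a\in\Z$. As $A$ is very ample on a variety of dimension $\geq 2$, the power $A^{a}$ is very ample whenever $a\geq 1$, so $h^{0}(X,A^{a})\geq 3$; on the other hand $h^{0}(X,A^{0})=1$, and $h^{0}(X,A^{a})=0$ for $a<0$ because a negative power of an ample line bundle carries no nonzero section on a positive-dimensional projective variety. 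Hence $h^{0}(X,A^{a})\geq 2$ exactly when $a\geq 1$, and $h^{0}(X,A^{b})\geq 1$ exactly when $b\geq 0$, which gives $\Sigma(X,A^{d})=\{(A^{\ell},A^{d-2\ell})\mid 1\leq\ell\leq d/2\}$. Because ${\rm Pic}(X)$ is finitely generated, Theorem \ref{thm:main1}.(2) then yields the asserted decomposition and tells us that it is an irreducible decomposition.

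For minimality, Theorem \ref{thm:main2} reduces us to showing that every $(A^{\ell},A^{d-2\ell})\in\Sigma(X,A^{d})$ is irreducible; since the second entry is $\mathcal O_{X}$ when $\ell=d/2$, it is enough to prove the single claim that for each integer $k\geq 1$ a general member of $H^{0}(X,A^{k})$ is irreducible in $\mbox{Cox}(X)=\bigoplus_{m\geq 0}H^{0}(X,A^{m})$. I would argue this as follows. A nonzero $f\in H^{0}(X,A^{k})$ with $k\geq 1$ is not a unit, the units of $\mbox{Cox}(X)$ being $\mathbb{K}^{\times}\subset H^{0}(X,\mathcal O_{X})=\mathbb{K}$; and since $\mbox{Cox}(X)$ is a $\Z_{\geq 0}$-graded domain, in any factorization $f=gh$ the factors are automatically homogeneous, say $g\in H^{0}(X,A^{i})$ and $h\in H^{0}(X,A^{j})$ with $i+j=k$, and this is a genuine factorization precisely when $i,j\geq 1$. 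Using that $X$ is locally factorial, the effective Cartier divisor $\operatorname{div}(f)$ is a sum $\sum_{t}a_{t}P_{t}$ of prime divisors $P_{t}$, each of which is Cartier with $\mathcal O_{X}(P_{t})=A^{m_{t}}$ for some $m_{t}\geq 1$; choosing $s_{t}\in H^{0}(X,A^{m_{t}})$ with $\operatorname{div}(s_{t})=P_{t}$ one gets $f=c\prod_{t}s_{t}^{a_{t}}$ with $c\in\mathbb{K}^{\times}$, and conversely a factorization $f=gh$ decomposes $\operatorname{div}(f)$ as a sum of two nonzero effective divisors. Thus $f$ is irreducible in $\mbox{Cox}(X)$ if and only if $\operatorname{div}(f)$ is a prime divisor of multiplicity one.

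It therefore remains to see that a general member of the very ample linear system $|A^{k}|$ is an integral divisor. Embedding $X$ by $|A^{k}|$, such a member is a general hyperplane section $Y$ of $X\subset\mathbb{P}H^{0}(X,A^{k})$. By Bertini's irreducibility theorem $Y$ is irreducible, where $\dim X\geq 2$ is used; and by a Bertini-type theorem for normality --- applicable because $X$ is normal, being locally factorial --- $Y$ is normal, hence reduced. So $Y$ is integral, the corresponding section $f$ has $\operatorname{div}(f)=Y$ prime of multiplicity one, and $f$ is irreducible in $\mbox{Cox}(X)$, proving the claim and with it the corollary.

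I expect the last step to be the main obstacle: one must first translate ``irreducible in $\mbox{Cox}(X)$'' into the geometric condition that $\operatorname{div}(f)$ be prime of multiplicity one --- this is exactly where local factoriality enters --- and then apply the Bertini theorems over a field that may have positive characteristic. Bertini irreducibility of a general hyperplane section of a variety of dimension $\geq 2$ is characteristic free, but the reducedness (equivalently, multiplicity-one) assertion needs a little care; exploiting the normality of $X$ allows one to invoke a characteristic-free Bertini theorem for normality and so avoid the smoothness pathologies of positive characteristic. Everything else is formal once Theorems \ref{thm:main1} and \ref{thm:main2} are available.
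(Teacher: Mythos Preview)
Your proposal is correct and is exactly the argument the paper has in mind: the corollary is stated without proof as a direct consequence of Theorems \ref{thm:main1} and \ref{thm:main2}, and you have supplied precisely the two verifications this requires---the explicit description of $\Sigma(X,A^d)$ and the irreducibility of each of its members via Bertini. Your identification of the role of $\dim X\geq 2$ (needed for Bertini irreducibility, and explaining why the curve case is deferred to \cite{PaShim}) and your use of Seidenberg's characteristic-free normality theorem to handle reducedness are the right ingredients; the paper leaves all of this implicit.
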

\smallskip

When $(X,L) = (\P^n , \mathcal{O}_{\P^n } (d))$ this result and Theorem \ref{thm:main1} show that the minimal irreducible decomposition of $\Phi_3 (X,L)$ is of the form
\begin{equation*}
\Phi_3 (\P^n , \mathcal{O}_{\P^n } (d) ) = \bigcup_{1 \leq \ell \leq d/2} W(\mathcal{O}_{\P^n} (\ell) ,\mathcal{O}_{\P^n} (d-2\ell) )
\end{equation*}
and $W(\mathcal{O}_{\P^n} (\ell) ,\mathcal{O}_{\P^n} (d-2\ell) )$ is a projective variety of dimension $2 {{n+\ell} \choose {n}} + {{n+d-2\ell} \choose {n}} -5$.

If $X$ is a locally factorial curve such that ${\rm Pic}(X)$ is simply generated, then $X$ must be the projective line. In this case, Theorem \ref{thm:main1} shows that
\begin{equation*}
\Phi_3 (\P^1 , \mathcal{O}_{\P^1 } (d) ) = \bigcup_{1 \leq \ell \leq d/2} W(\mathcal{O}_{\P^1} (\ell) ,\mathcal{O}_{\P^1} (d-2\ell) )
\end{equation*}
is an irreducible decomposition of $\Phi_3 (\P^1 , \mathcal{O}_{\P^1 } (d) )$ and $W(\mathcal{O}_{\P^1} (\ell) ,\mathcal{O}_{\P^1} (d-2\ell) )$ is a $(d-2)$-dimensional variety for every $1 \leq \ell \leq d/2$. In this case, the minimality of this irreducible decomposition is shown to be true in \cite{PaShim}. \\

The organization of this paper is as follows. In Section 2 we construct the set $W(A,B)$ mentioned in Theorem \ref{thm:main1} and prove that it is a projective variety. Section 3 is devoted to describing the minimal irreducible decomposition of $\Phi_3 (X,L)$. Finally, in section $4$ we provide an example where $X$ is an irrational curve and hence ${\rm Pic}(X)$ is not finitely generated. \\

\noindent {\bf Acknowledgement.} This work was supported by the National Research Foundation of Korea(NRF) grant funded by the Korea government(MSIT) (No. 2022R1A2C1002784). The author is also grateful to the referee for the valuable comments and helpful corrections.
\vspace{0.2 cm}

\section{Construction of $W(A,B)$}
\noindent In this section we will construct the set $W(A,B)$ mentioned in Theorem \ref{thm:main1}. Let $X$ be a projective variety and $L$ a very ample line bundle on $X$ defining the linearly normal embedding
\begin{equation*}
X \subset \P^r ,\quad r = h^0 (X,L) -1.
\end{equation*}
Throughout this section we suppose that $L$ is decomposed into $L = A^2 \otimes B$ for some $A,B \in \mbox{Pic}(X)$ such that $p := h^0 (X,A)-1 \geq 1$ and $q := h^0 (X,B)-1 \geq 0$. We use the following notations:\\

$\bullet$ $\varphi : H^0 (X,L) \rightarrow S_1$ : the isomorphism inducing the embedding of $X$ by $L$
\smallskip

$\bullet$ $\{Q_0 , Q_1 , \ldots , Q_m \}$ : a basis for $I(X,L)_2$
\smallskip

$\bullet$ $\{s_0 , s_1 ,\ldots , s_p\}$ : a basis for $H^0 (X,A)$
\smallskip

$\bullet$ $\{h_0 , h_1 , \ldots , h_q \}$ : a basis for $H^0 (X,B)$
\smallskip

$\bullet$ $V(A,B) = H^0 (X,A) \times H^0 (X,A) \times H^0 (X,B)$ \\

\begin{definition and remark}\label{def:W(A,B)}
We define the map $Q_{A,B} : V(A,B) \rightarrow I(X,L)_2$ by
\begin{equation*}
Q_{A,B} (s,t,h) = \varphi (s \otimes s \otimes h) \varphi (t \otimes t \otimes h) - \varphi (s \otimes t \otimes h)^2 .
\end{equation*}
Also we denote the image of $Q_{A,B}$ in $\P (I(X)_2 )$ by $W(A,B)$. That is,
\begin{equation*}
W(A,B) := \{ [Q_{A,B} (s,t,h)]~|~(s,t,h) \in V(A,B) ,~Q_{A,B} (s,t,h) \neq 0 \}.
\end{equation*}
The map $Q_{A,B}$ is well-defined since the restriction of the quadratic polynomial $Q_{A,B} (s,t,h)$ to $X$ becomes
\begin{equation*}
(s \otimes s \otimes h)|_X \times (t \otimes t \otimes h)|_X - (s \otimes t \otimes h)^2 |_X = 0
\end{equation*}
(cf. \cite[Proposition 6.10]{E}). Thus $W (A,B)$ is contained in $\Phi_3 (X)$.
\end{definition and remark}
\smallskip

We will show that $W(A,B)$ is a projective variety of dimension $2p+q-2$. We begin with the following lemma.

\begin{lemma}\label{lem:basic properties}
Let $(s,t,h) \in V(A,B)$ and $a,b,c,d \in \mathbb{K}$. Then
\smallskip

\renewcommand{\descriptionlabel}[1]%
             {\hspace{\labelsep}\textrm{#1}}
\begin{description}
\setlength{\labelwidth}{13mm}
\setlength{\labelsep}{1.5mm}
\setlength{\itemindent}{0mm}

\item[$(1)$] $Q_{A,B} (s,t,h) = Q_{A,B} (t,s,h)$ and $Q_{A,B} (as,bt,ch) = a^2 b^2 c^2 Q_{A,B} (s,t,h)$
\smallskip

\item[$(2)$] $Q_{A,B} (s,t,h)=0$ if and only if either $\{ s,t \}$ is $\mathbb{K}$-linearly dependent or else $h=0$.
\smallskip

\item[$(3)$] $Q_{A,B} (as+bt,cs+dt,h) = (ad-bc)^2 Q_{A,B} (s,t,h)$.
\end{description}
\end{lemma}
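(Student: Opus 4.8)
The three statements are all elementary consequences of the defining formula
\[
Q_{A,B}(s,t,h) = \varphi(s \otimes s \otimes h)\,\varphi(t \otimes t \otimes h) - \varphi(s \otimes t \otimes h)^2,
\]
together with the fact that $\varphi$ is linear and that the tensor product $H^0(X,A) \otimes H^0(X,A) \otimes H^0(X,B) \to H^0(X,L)$ is multilinear. I will treat (1), (2), (3) in that order, since (3) really subsumes the first part of (1) and its proof will reuse (2).

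For (1): the symmetry $Q_{A,B}(s,t,h) = Q_{A,B}(t,s,h)$ is immediate because interchanging $s$ and $t$ swaps the two factors in the first product and leaves $\varphi(s\otimes t\otimes h)^2 = \varphi(t\otimes s\otimes h)^2$ unchanged (the section $s\otimes t\otimes h$ and $t\otimes s\otimes h$ are literally the same element of $H^0(X,A^2\otimes B)$ since multiplication of sections is commutative). For the scaling identity $Q_{A,B}(as,bt,ch) = a^2b^2c^2\,Q_{A,B}(s,t,h)$, I pull the scalars out of each tensor slot through multilinearity of $\otimes$ and linearity of $\varphi$: the first term acquires a factor $(a^2c)(b^2c) = a^2b^2c^2$ and the squared term acquires $(abc)^2 = a^2b^2c^2$, so the whole expression scales by $a^2b^2c^2$.

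For (2): if $\{s,t\}$ is linearly dependent, write (without loss of generality) $t = \lambda s$; then by (1) we get $Q_{A,B}(s,\lambda s,h) = \lambda^2 Q_{A,B}(s,s,h)$, and $Q_{A,B}(s,s,h) = \varphi(s\otimes s\otimes h)^2 - \varphi(s\otimes s\otimes h)^2 = 0$; the case $s = 0$ is the sub-case $\lambda = 0$, and $h = 0$ kills every term outright. Conversely, suppose $\{s,t\}$ is linearly independent and $h \neq 0$; I must show $Q_{A,B}(s,t,h) \neq 0$. The cleanest route is to restrict attention to the three sections $sh, th \in H^0(X,A\otimes B)$: since $h\neq 0$ and $s,t$ are independent, and since multiplication by $h$ in the Cox ring (an integral domain) is injective, $sh$ and $th$ are linearly independent sections of $A\otimes B$. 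Then, viewing $u := \varphi(s\otimes s\otimes h)$, $v := \varphi(t\otimes t\otimes h)$, $w := \varphi(s\otimes t\otimes h)$ as elements of the polynomial ring $S$, we have $uv = w^2$ as functions on $X$ only, but as polynomials $uv - w^2 = Q_{A,B}(s,t,h)$ is a priori nonzero; what I need is that this particular polynomial is nonzero in $S$. This is the one point requiring a genuine argument rather than bookkeeping, and I expect it to be the main obstacle: one must use that $\varphi$ is an isomorphism onto $S_1$ and that $u,v,w$ are, up to the identification $\varphi$, three degree-one forms which — because $sh, th$ are independent and $(sh)^2, (th)^2, (sh)(th)$ span a $3$-dimensional space of sections of $A^2\otimes B$ — remain genuinely quadratic-rank-$3$ data; concretely, if $uv - w^2$ were the zero polynomial in $S$, then since $S$ is a UFD and $w^2 = uv$, the linear forms $u,v$ would each be a scalar multiple of $w$, forcing $\varphi(s\otimes s\otimes h)$ and $\varphi(s\otimes t\otimes h)$ to be proportional, hence (applying $\varphi^{-1}$ and using injectivity of multiplication by $h$) $s\otimes s$ and $s\otimes t$ proportional in $H^0(X,A)\otimes H^0(X,A)$ modulo symmetrization — i.e. $s$ and $t$ dependent, a contradiction. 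I will need to phrase this carefully, perhaps invoking \cite[Proposition 6.10]{E} or the Cox-ring UFD property already recalled before Theorem \ref{thm:main2}.

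For (3): set $s' = as + bt$, $t' = cs + dt$. Expanding $\varphi(s'\otimes s'\otimes h)$, $\varphi(t'\otimes t'\otimes h)$, $\varphi(s'\otimes t'\otimes h)$ by multilinearity in terms of the four basic monomials $\varphi(s\otimes s\otimes h)$, $\varphi(s\otimes t\otimes h)$, $\varphi(t\otimes t\otimes h)$ and then forming $Q_{A,B}(s',t',h) = \varphi(s'\otimes s'\otimes h)\varphi(t'\otimes t'\otimes h) - \varphi(s'\otimes t'\otimes h)^2$ is a purely formal computation: writing $U = \varphi(s\otimes s\otimes h)$, $W = \varphi(s\otimes t\otimes h)$, $Y = \varphi(t\otimes t\otimes h)$ one has $\varphi(s'\otimes s'\otimes h) = a^2 U + 2ab W + b^2 Y$, $\varphi(t'\otimes t'\otimes h) = c^2 U + 2cd W + d^2 Y$, $\varphi(s'\otimes t'\otimes h) = ac\,U + (ad+bc)W + bd\,Y$, and the difference of products collapses — after cancellation of all terms — to $(ad-bc)^2(UY - W^2) = (ad-bc)^2 Q_{A,B}(s,t,h)$. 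I will present this as a one-line identity in the commutative ring $S$ rather than grinding the cancellations in the text; the identity is exactly the classical fact that the $2\times 2$ determinant of the symmetric bilinear form with matrix $\begin{pmatrix} U & W \\ W & Y\end{pmatrix}$ transforms by the square of the determinant of the change of basis. This also re-proves $Q_{A,B}(s,t,h) = Q_{A,B}(t,s,h)$ (take $a=d=0$, $b=c=1$, giving $(ad-bc)^2 = 1$) and is consistent with (2) (the form is degenerate, i.e. rank $\le 2$, precisely when $UY = W^2$, which by the UFD argument above happens iff $s,t$ are dependent or $h = 0$).
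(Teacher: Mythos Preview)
Your proposal is correct. Parts (1) and (2) are essentially the paper's argument: for (2) the paper also observes that if $Q_{A,B}(s,t,h)=0$ with $s,t,h$ all nonzero, then $\varphi(s^2h)\varphi(t^2h)=\varphi(sth)^2$ in the UFD $S$, hence $\varphi(sth)=\lambda\varphi(s^2h)=\lambda^{-1}\varphi(t^2h)$, and pulling back through $\varphi$ and cancelling $h$ gives $t=\lambda s$. Your appeal to the Cox-ring UFD is more than is needed---only the polynomial ring $S$ and integrality of $X$ are used---but this is harmless.

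The genuine difference is in (3). The paper does \emph{not} expand directly: it first checks the triangular case $c=0$, where $Q_{A,B}(as+bt,\,dt,\,h)=a^2d^2\,Q_{A,B}(s,t,h)$ is a short computation, and then for $c\neq 0$ sets $u=cs+dt$, rewrites $as+bt=\tfrac{a}{c}u+\tfrac{bc-ad}{c}t$, and applies the triangular case twice. Your route---recognizing $Q_{A,B}(s,t,h)$ as the determinant of the Gram matrix $\bigl(\begin{smallmatrix}U&W\\W&Y\end{smallmatrix}\bigr)$ and invoking the classical $(\det)^2$ transformation law under change of basis---is more conceptual and makes the link to ``rank $\le 3$'' transparent; the paper's substitution trick avoids writing out the six-term cancellation but is less illuminating. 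Both are fully rigorous.
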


\begin{proof}
$(1)$ The assertions come immediately from the definition of the map $Q_{A,B}$.

\noindent $(2)$ $(\Rightarrow)$ If $s$ or $t$ or $h$ is $0$, then we are done. Suppose that $s$, $t$ and $h$ are nonzero. Then $\varphi (s^2 h)$, $\varphi (t^2 h)$ and $\varphi (sth)$ are nonzero linear forms on $\mathbb{P}^r$. If $Q_{A,B} (s,t,h)=0$ then
\begin{equation*}
\varphi (s^2 h) \varphi (t^2 h) = \varphi (sth)^2
\end{equation*}
and hence
$$\varphi (sth) = \lambda \varphi (s^2 h) = \lambda^{-1} \varphi (t^2 h)$$
for some $\lambda \in \mathbb{K}^*$. Thus
\begin{equation*}
sth = \lambda s^2 h = \lambda^{-1} t^2 h
\end{equation*}
and hence $t = \lambda s$. This shows that $\{ s,t \}$ is $\mathbb{K}$-linearly dependent.

\noindent $(\Leftarrow)$ If $s$ or $t$ or $h$ is $0$, then $Q_{A,B} (s,t,h) =0$. Now, suppose that $s$, $t$ and $h$ are nonzero and $\{s,t\}$ is $\mathbb{K}$-linearly dependent. Then $t = \lambda s$ for some $\lambda \in \mathbb{K}^*$ and hence
\begin{equation*}
Q_{A,B} (s,t,h) = \varphi (s^2 h) \varphi (\lambda^2 s^2 h) - \varphi (\lambda s^2 h)^2 = 0.
\end{equation*}

\noindent $(3)$ When $c=0$ one can easily check that
\begin{equation}\label{eq:det form}
Q_{A,B} (as+bt,dt,h) = a^2 d^2 Q_{A,B} (s,t,h).
\end{equation}
Suppose that $c \neq 0$ and let $u = cs+dt$. Then it holds by (\ref{eq:det form}) that
\begin{equation*}
Q_{A,B} (as+bt,cs+dt,h) = Q_{A,B} \left( \frac{a}{c} u + \frac{bc-ad}{c} t , u , h \right) = \frac{(bc-ad)^2}{c^2 } Q_{A,B} (t,u,h).
\end{equation*}
Also, again by (\ref{eq:det form}), we get
\begin{equation*}
Q_{A,B} ( t , u , h) = Q_{A,B} (cs+dt,t,h) = c^2 Q_{A,B} (s,t,h).
\end{equation*}
Thus the desired equality $Q_{A,B} (as+bt,cs+dt,h) = (ad-bc)^2 Q_{A,B} (s,t,h)$ holds.
\end{proof}

By Lemma \ref{lem:basic properties}.$(2)$, it holds that the inverse image of $I(X,L)_2 - \{0\}$ is equal to
\begin{equation*}
Q_{A,B} ^{-1} \left( I(X,L)_2 - \{0\} \right) = \{ (s,t)~|~s,t \in H^0 (X,A),~\dim_{\mathbb{K}} \langle s,t \rangle = 2 \} \times \left( H^0 (X,B) - \{0\} \right).
\end{equation*}
So, there are the following natural maps
$$\pi_1 : Q_{A,B} ^{-1} \left( I(X,L)_2 - \{0\} \right) \rightarrow \mathbb{G}(1,\P^p ) \times \P^q \quad \mbox{and} \quad \pi_2 : I(X,L)_2 - \{0\} \rightarrow \P \left( I(X,L)_2 \right) = \P^m .$$
Also Lemma \ref{lem:basic properties}.$(1)$ and $(3)$ show that $Q_{A,B}$ induces a map
$$\widetilde{Q_{A,B}} : \mathbb{G}(1,\P^p ) \times \P^q \rightarrow \P^m$$
such that the following diagram is commutative:\\
\begin{equation*}
\begin{CD}
Q_{A,B} ^{-1} \left( I(X,L)_2 - \{0\} \right) & \quad \stackrel{Q_{A,B}}{\rightarrow} \quad  &  I(X,L)_2 - \{0\}         \\
\pi_1 \downarrow                        &                                          & \quad  \downarrow  \pi_2      \\
\mathbb{G}(1,\P^p ) \times \P^q  & \quad  \stackrel{\widetilde{Q_{A,B}}}{\rightarrow}  \quad  &  \P^m \\\\
\end{CD}
\end{equation*}
Recall that $W(A,B)$ denotes the image of $Q_{A,B}$ in $\P^m$. Thus we can interpret $W(A,B)$ as the image of the projective variety $\mathbb{G}(1,\P^p ) \times \P^q$ by $\widetilde{Q_{A,B}}$. Now, it is important to know whether $\widetilde{Q_{A,B}}$ is a morphism or not. The following theorem answers this question.

\begin{theorem}\label{thm:main1 rank3 case}
The map $\widetilde{Q_{A,B}} : \mathbb{G} (1,\P^p ) \times \P^q \rightarrow \P^m$ is a finite morphism such that
\begin{equation*}
\left( \widetilde{Q_{A,B}} \right)^* (\mathcal{O}_{\P^m } (1) ) = \mathcal{O}_{\mathbb{G} (1,\P^p ) \times \P^q } (2,2).
\end{equation*}
In particular, $W(A,B)$ is a projective variety and $\dim~W(A,B) = 2p+q-2$.
\end{theorem}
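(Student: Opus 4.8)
The plan is to work with the composite map $Q_{A,B} : V(A,B) \to I(X,L)_2$ at the level of affine cones and to factor it through a Segre-type construction, so that $\tilde{Q_{A,B}}$ becomes visibly a morphism given by an explicit linear system. First I would fix bases $\{s_0,\dots,s_p\}$ of $H^0(X,A)$ and $\{h_0,\dots,h_q\}$ of $H^0(X,B)$, write $s = \sum \alpha_i s_i$, $t = \sum \beta_i s_i$, $h = \sum \gamma_k h_k$, and expand $Q_{A,B}(s,t,h) = \varphi(s^2 h)\varphi(t^2 h) - \varphi(sth)^2$. The key observation is that each of the three products $s^2 h, sth, t^2 h \in H^0(X,A^2 \otimes B)$ is bilinear (resp.\ the first is quadratic in $\alpha$, linear in $\gamma$, etc.), so the coefficients of $Q_{A,B}(s,t,h)$ in the basis $\{Q_0,\dots,Q_m\}$ of $I(X,L)_2$ are polynomials of bidegree $(4,2)$ in $(\alpha,\beta;\gamma)$. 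By Lemma~\ref{lem:basic properties}.(3) this polynomial map is invariant under the $\GL_2$-action on $(s,t)$ only through the factor $(ad-bc)^2$, which means that after passing to Pl\"ucker coordinates $p_{ij} = \alpha_i\beta_j - \alpha_j\beta_i$ on $\G(1,\P^p)$, the coefficients of $Q_{A,B}$ become polynomials of bidegree $(2,2)$ in the Pl\"ucker coordinates $p_{ij}$ and the coordinates $\gamma_k$. Concretely this realizes $\tilde{Q_{A,B}}$ as the restriction to $\G(1,\P^p)\times\P^q$ of a linear subsystem of $\mathcal{O}_{\G(1,\P^p)\times\P^q}(2,2)$, pulled back along a linear projection $\P\big(H^0(\mathcal{O}(2,2))\big) \dashrightarrow \P^m$.

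Next I would show that this linear system has no base points on $\G(1,\P^p)\times\P^q$, which is exactly the statement that $\tilde{Q_{A,B}}$ is a genuine morphism: by Lemma~\ref{lem:basic properties}.(2), $Q_{A,B}(s,t,h) = 0$ forces $\{s,t\}$ linearly dependent or $h = 0$, i.e.\ the corresponding point does not lie in $\G(1,\P^p)\times\P^q$ to begin with. Hence for every $([\langle s,t\rangle], [h]) \in \G(1,\P^p)\times\P^q$ the value $Q_{A,B}(s,t,h)$ is a nonzero element of $I(X,L)_2$, so the map to $\P^m$ is everywhere defined. Since it is a morphism from a projective variety, its image $W(A,B)$ is a closed subvariety of $\P^m$, and in particular a projective variety; this also re-proves that $W(A,B)$ is irreducible, being the image of the irreducible variety $\G(1,\P^p)\times\P^q$.

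For finiteness, I would argue that $\tilde{Q_{A,B}}$ is finite because it is a projective morphism with finite fibers, or equivalently because the pullback line bundle $\big(\tilde{Q_{A,B}}\big)^*\mathcal{O}_{\P^m}(1) = \mathcal{O}_{\G(1,\P^p)\times\P^q}(2,2)$ is ample. The identification of the pullback bundle follows from the bidegree computation above: the coordinate functions on $\P^m$ pull back to sections of $\mathcal{O}(2,2)$, and the system is not contained in any $\mathcal{O}(a,b)$ with $(a,b) \lneq (2,2)$ because the $p_{ij}^2$-terms and the $\gamma_k\gamma_\ell$-cross terms genuinely appear (one can exhibit them by choosing $s = s_i$, $t = s_j$, $h = h_k + h_\ell$). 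Granting ampleness of $\mathcal{O}(2,2)$ on $\G(1,\P^p)\times\P^q$, any morphism to projective space whose pullback of $\mathcal{O}(1)$ is ample has finite fibers (a positive-dimensional fiber would be a projective variety on which an ample bundle restricts trivially), and a proper morphism with finite fibers is finite. Finally, finiteness preserves dimension, so $\dim W(A,B) = \dim \G(1,\P^p)\times\P^q = 2(p-1) + q = 2p+q-2$.

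The main obstacle I anticipate is verifying cleanly that the coefficients of $Q_{A,B}$, after the substitution into Pl\"ucker coordinates, really do span a base-point-free subsystem of $\mathcal{O}(2,2)$ rather than merely being \emph{contained} in it --- equivalently, checking that the descent of $Q_{A,B}$ through $\pi_1$ does not drop the bidegree or acquire spurious common factors. The base-point-freeness is handed to us for free by Lemma~\ref{lem:basic properties}.(2), so the real content is just the bookkeeping showing the linear system is a subsystem of the complete $|\mathcal{O}(2,2)|$ and that the pullback of $\mathcal{O}_{\P^m}(1)$ is \emph{exactly} $\mathcal{O}(2,2)$; from there ampleness of $\mathcal{O}(2,2)$ on a product of a Grassmannian with a projective space is standard and finiteness is formal.
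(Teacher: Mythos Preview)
Your proposal is correct and follows essentially the same line as the paper: show the coordinate functions $G_i$ lie in $H^0(\G(1,\P^p) \times \P^q, \mathcal{O}(2,2))$ via ${\rm SL}_2$-invariance, invoke Lemma~\ref{lem:basic properties}.(2) for base-point-freeness, and deduce finiteness from ampleness of $\mathcal{O}(2,2)$. The only differences are that the paper establishes the tridegree $(2,2,2)$ through an explicit expansion argument (Proposition~\ref{prop:coeff poly}) rather than your direct multilinearity observation, and makes the appeal to the first fundamental theorem of invariant theory---which is what justifies your ``passing to Pl\"ucker coordinates'' step---explicit.
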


To prove this theorem we require some notations and definitions.

\begin{notation and remarks}\label{no and rks:Grassmannian}
$(1)$ An element $(s,t,h)$ of $V(A,B)$ is uniquely written as
\begin{equation}\label{eq:(s,t,h)}
(s,t,h) = (x_0 s_0 +  \cdots +x_p s_p  , ~ y_0 s_0 +  \cdots + y_p s_p , z_0 h_0  + \cdots +z_q h_q ).
\end{equation}
The space $V(A,B)$ can be identified with $M_{2 \times (p+1)} (\mathbb{K}) \times \mathbb{A}^{q+1}$
via the map sending
\begin{equation*}
(x_0 s_0 +  \cdots +x_p s_p  , ~ y_0 s_0 +  \cdots + y_p s_p , z_0 h_0  + \cdots +z_q h_q ) \in V(A,B)
\end{equation*}
to
$$\left( \left(\begin{array}{rrrr}
x_0 & x_1 & \cdots & x_p \\
y_0 & y_1 & \cdots & y_p
\end{array}\right) , (z_0 , z_1 , \ldots, z_q ) \right) \in M_{2 \times (p+1)} (\mathbb{K}) \times \mathbb{A}^{q+1} .$$
Let the affine coordinate ring of the affine space $V(A,B) \cong M_{2 \times (p+1)} (\mathbb{K}) \times \mathbb{A}^{q+1}$ be
$$T = \mathbb{K} [x_0 ,x_1 , \ldots , x_p , y_0 ,y_1 , \ldots , y_p ] \otimes_{\mathbb{K}} \mathbb{K}[z_0 , z_1 , \ldots , z_q ].$$
Consider the action of ${\rm SL}_2 (\mathbb{K})$ on $M_{2 \times (p+1)} (\mathbb{K})$ via multiplication on the left. Then, by \textit{the first fundamental theorem of invariant theory}, it holds that
\begin{equation*}
T^{{\rm SL}_2 (\mathbb{K})} = \mathbb{K} [ \{x_i y_j - x_j y_i ~|~ 0 \leq i < j \leq p \} ] \otimes_{\mathbb{K}} \mathbb{K} [z_0 ,z_1 , \ldots , z_q]
\end{equation*}
(cf. \cite[Theorem 8.6]{Muk}). The ring
$$\mathbb{K} [ \{x_i y_j - x_j y_i ~|~ 0 \leq i < j \leq p \} ]$$
is the homogeneous coordinate ring of the Pl\"{u}cker embedding of the Grassmannian manifold $\mathbb{G} (1,\P^p )$ into $\P^{{{p+1} \choose 2} -1}$. Also $\mathbb{K} [z_0 ,z_1 , \ldots , z_q]$ is the homogeneous coordinate ring of the projective space $\P^q$. For each line bundle $\mathcal{O} (a,b)$ on $\mathbb{G} (1,\P^p ) \times \P^q$, it holds that
\begin{equation*}
H^0 (\mathbb{G} (1,\P^p ) \times \P^q , \mathcal{O} (a,b)) \cong  \mathbb{K} [ \{x_i y_j - x_j y_i ~|~ 0 \leq i < j \leq p \} ]_a \otimes_{\mathbb{K}} \mathbb{K} [z_0 ,z_1 , \ldots , z_q]_b .
\end{equation*}

\noindent $(2)$ Let $(s,t,h) \in V(A,B)$ be as in (\ref{eq:(s,t,h)}). Since $Q_{A,B} (s,t,h)$ is a member of $I(X,L)_2$, it can be expressed uniquely as
\begin{equation*}
Q_{A,B} (s,t,h) = G_0 Q_0 + G_1 Q_1 + \cdots + G_m Q_m
\end{equation*}
for some functions $G_0 , G_1 , \ldots , G_m$ in $x_0 ,x_1 , \ldots , x_p , y_0 ,y_1 , \ldots , y_p , z_0 , z_1 , \ldots , z_q$. Note that the map
$$\widetilde{Q_{A,B}} : \mathbb{G} (1,\P^p ) \times \P^q \rightarrow \P^m$$
is defined as
\begin{equation*}
\widetilde{Q_{A,B}} (\langle s,t \rangle , [h]) = [G_0 (P) : G_1 (P) : \cdots : G_m (P)]
\end{equation*}
where $P = (x_0 ,x_1 , \ldots , x_p , y_0 ,y_1 , \ldots , y_p , z_0 , z_1 , \ldots , z_q)$. Therefore it is important to understand the functions $G_0 , G_1 , \ldots , G_m$ in order to prove Theorem \ref{thm:main1 rank3 case}.

\noindent $(3)$ (cf. \cite[Lemma 2.3.(2)]{HLMP}) For any $s,t,u\in H^0(X,A)$, $g,h \in H^0(X,B)$ and $a,b \in \mathbb{K}$, it holds that
\begin{equation*}
Q_{A,B} (s,at+bu,h) = (a^2 -ab) Q_{A,B} (s,t,h) + (b^2 -ab) Q_{A,B} (s,u,h) + ab Q_{A,B} (s,t+u,h)
\end{equation*}
and
\begin{equation*}
Q_{A,B} (s,t,ag+bh) = (a^2 -ab) Q_{A,B} (s,t,g) + (b^2 -ab) Q_{A,B} (s,t,h) + ab Q_{A,B} (s,t,g+h).
\end{equation*}

\noindent $(4)$ (cf. \cite[Lemma 2.3.(3)]{HLMP}) For $\ell \geq 3$, let
$$s,t,t_1 , \ldots , t_{\ell} \in H^0(X,A) \quad \mbox{and} \quad h,g_1 , \ldots , g_{\ell} \in H^0(X,B).$$
Then
\begin{equation}\label{eq:expanding 1}
Q_{A,B} (s,t_1+t_2+\cdots+t_\ell ,h)=\sum_{1\leq i<j\leq \ell} Q_{A,B} (s,t_i+t_j,h)
-(\ell -2)\sum_{i=1}^{\ell} Q_{A,B} (s,t_i,h)
\end{equation}
and
\begin{equation}\label{eq:expanding 2}
Q_{A,B}(s,t,g_1+g_2+\cdots+g_{\ell})=\sum_{1\leq i<j\leq \ell } Q_{A,B} (s,t,g_i+g_j)
-(\ell -2)\sum_{i=1}^{\ell} Q_{A,B} (s,t,g_i).
\end{equation}
\end{notation and remarks}
\smallskip

\begin{proposition}\label{prop:coeff poly}
Let $G_0 , G_1 , \ldots , G_m$ be as in Notation and Remarks \ref{no and rks:Grassmannian}. Then
$$\{ G_0 , G_1 , \ldots , G_m \} \subset \mathbb{K} [x_0 , x_1 , \ldots ,x_p ]_2 \otimes_{\mathbb{K}} \mathbb{K} [y_0 , y_1 , \ldots ,y_p ]_2 \otimes_{\mathbb{K}} \mathbb{K} [z_0 , \ldots , z_q]_2 .$$
\end{proposition}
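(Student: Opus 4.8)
The plan is to read off the functions $G_0,\dots,G_m$ directly from the defining formula for $Q_{A,B}$ and to keep track of multidegrees in the three blocks of variables. Fix a basis $\{e_0,\dots,e_r\}$ of $H^0(X,L)$, so that $\{\varphi(e_0),\dots,\varphi(e_r)\}$ is a basis of $S_1$. Writing $(s,t,h)$ as in (\ref{eq:(s,t,h)}) and expanding,
\begin{equation*}
s\otimes s\otimes h=\sum_{0\le i,j\le p}\ \sum_{0\le k\le q} x_i x_j z_k\,(s_i\otimes s_j\otimes h_k),
\end{equation*}
and each $s_i\otimes s_j\otimes h_k\in H^0(X,A^2\otimes B)=H^0(X,L)$ has a fixed expansion $s_i\otimes s_j\otimes h_k=\sum_{\ell} c_{ijk\ell}\,e_\ell$ with $c_{ijk\ell}\in\mathbb{K}$. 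Hence the $\ell$-th coordinate of $\varphi(s\otimes s\otimes h)\in S_1$ with respect to $\{\varphi(e_\ell)\}$ equals $\sum_{i,j,k} c_{ijk\ell}\,x_ix_jz_k$, which lies in $\mathbb{K}[x_0,\dots,x_p]_2\otimes_{\mathbb{K}}\mathbb{K}[z_0,\dots,z_q]_1$ and involves no $y$-variable. By the same computation, the coordinates of $\varphi(t\otimes t\otimes h)$ lie in $\mathbb{K}[y_0,\dots,y_p]_2\otimes_{\mathbb{K}}\mathbb{K}[z_0,\dots,z_q]_1$, and the coordinates of $\varphi(s\otimes t\otimes h)$ lie in $\mathbb{K}[x_0,\dots,x_p]_1\otimes_{\mathbb{K}}\mathbb{K}[y_0,\dots,y_p]_1\otimes_{\mathbb{K}}\mathbb{K}[z_0,\dots,z_q]_1$.

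Next I would multiply out in the monomial basis of $S_2$. A product of a linear form whose coordinate vector is multihomogeneous of tridegree $(2,0,1)$ in $(x,y,z)$ with one of tridegree $(0,2,1)$ has coordinate vector of tridegree $(2,2,2)$, and the square of a linear form with coordinate vector of tridegree $(1,1,1)$ likewise has coordinate vector of tridegree $(2,2,2)$. Since $Q_{A,B}(s,t,h)=\varphi(s\otimes s\otimes h)\,\varphi(t\otimes t\otimes h)-\varphi(s\otimes t\otimes h)^2$, it follows that when $Q_{A,B}(s,t,h)$ is written in the monomial basis of $S_2$, every coefficient lies in $\mathbb{K}[x_0,\dots,x_p]_2\otimes_{\mathbb{K}}\mathbb{K}[y_0,\dots,y_p]_2\otimes_{\mathbb{K}}\mathbb{K}[z_0,\dots,z_q]_2$.

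Finally I would change basis from the monomial basis of $S_2$ to $\{Q_0,\dots,Q_m\}$. Extend $\{Q_0,\dots,Q_m\}$ to a $\mathbb{K}$-basis of $S_2$ by adjoining finitely many further quadrics $R_1,\dots,R_c$; the transition between this basis and the monomial basis is a fixed invertible matrix with entries in $\mathbb{K}$. By Definition and Remark \ref{def:W(A,B)} we have $Q_{A,B}(s,t,h)\in I(X,L)_2=\langle Q_0,\dots,Q_m\rangle$, so its coordinates against $R_1,\dots,R_c$ vanish and its coordinates against $Q_0,\dots,Q_m$ are precisely $G_0,\dots,G_m$. Being fixed $\mathbb{K}$-linear combinations of the monomial coordinates of $Q_{A,B}(s,t,h)$, the $G_i$ again lie in $\mathbb{K}[x_0,\dots,x_p]_2\otimes_{\mathbb{K}}\mathbb{K}[y_0,\dots,y_p]_2\otimes_{\mathbb{K}}\mathbb{K}[z_0,\dots,z_q]_2$, which is the assertion. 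There is no genuine obstacle in this argument; the only step deserving care is this last one, namely the observation that rewriting a multihomogeneous element of $S_2$ in a different $\mathbb{K}$-basis cannot cause any mixing of degrees, because the transition matrix has scalar entries. (One could alternatively organize the computation around the linearization identities in Notation and Remarks \ref{no and rks:Grassmannian}.(3)--(4), but the direct expansion above is the most transparent route.)
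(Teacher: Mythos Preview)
Your argument is correct, and it takes a genuinely different route from the paper's own proof. The paper never expands $Q_{A,B}(s,t,h)$ in a basis of $S_2$; instead it repeatedly applies the polarization identities of Notation and Remarks~\ref{no and rks:Grassmannian}.(3)--(4). First (\ref{eq:expanding 1}) and (\ref{eq:expanding 2}) are used to rewrite $Q_{A,B}(s,t,h)$ as a $\mathbb{K}$-linear combination of the finitely many quadrics $Q_{A,B}(\sigma,\tau,\eta)$ with $\sigma\in I$, $\tau\in J$, $\eta\in K$; then the two-variable identities in (3) pull the scalars $x_i,y_i,z_i$ out of each such quadric, producing coefficients that visibly lie in $\mathbb{K}[x]_2\otimes\mathbb{K}[y]_2\otimes\mathbb{K}[z]_2$ times quadrics $Q_{A,B}(\sigma',\tau',\eta')$ with $\sigma',\tau',\eta'$ drawn from the fixed finite sets $I',J',K'$. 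Since these last quadrics are constant elements of $I(X,L)_2$, expanding them in $Q_0,\dots,Q_m$ finishes the proof.

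The trade-off is this: your approach is conceptually cleaner, because multihomogeneity of tridegree $(2,2,2)$ is immediate from multilinearity of the multiplication map $H^0(X,A)\otimes H^0(X,A)\otimes H^0(X,B)\to H^0(X,L)$ and bilinearity of the product $S_1\times S_1\to S_2$; no auxiliary identities are needed. The paper's approach, on the other hand, never leaves $I(X,L)_2$ for the ambient $S_2$ and does not require extending $\{Q_0,\dots,Q_m\}$ to a basis of $S_2$; it also makes the identities (3)--(4) do double duty, since they were already recorded for other purposes in \cite{HLMP}. Either way, the final change-of-basis step you flag is indeed harmless for exactly the reason you give: the transition matrix has entries in $\mathbb{K}$ and therefore preserves each trihomogeneous piece.
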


\begin{proof}
Let $(s,t,h) \in V(A,B)$ be as in (\ref{eq:(s,t,h)}). Also let
\begin{equation*}
\begin{CD}
I &~=~& \{ x_i s_i ~|~ 0 \leq i \leq p \} \cup \{ x_i s_i + x_j s_j ~|~ 0 \leq i < j \leq p \},\\
I'&~=~& \{   s_i ~|~ 0 \leq i \leq p \} \cup \{   s_i +  s_j ~|~ 0 \leq i < j \leq p \}, \\
J &~=~& \{ y_i s_i ~|~ 0 \leq i \leq p \} \cup \{ y_i s_i + y_j s_j ~|~ 0 \leq i < j \leq p \}, \\
J'&~=~& \{   s_i ~|~ 0 \leq i \leq p \} \cup \{   s_i +   s_j ~|~ 0 \leq i < j \leq p \}, \\
K &~=~& \{ z_i h_i ~|~  0 \leq i \leq q \} \cup \{ z_i h_i + z_j h_j ~|~ 0 \leq i < j \leq q \} ~\mbox{and}\\
K'&~=~& \{  h_i ~|~  0 \leq i \leq q \} \cup \{  h_i + h_j ~|~ 0 \leq i < j \leq q \}.
\end{CD}
\end{equation*}
By expanding $Q_{A,B} (s,t,h)$ using (\ref{eq:expanding 1}) and (\ref{eq:expanding 2}), we can write $Q_{A,B} (s,t,h)$ as a $\mathbb{K}$-linear combination of elements in the set
\begin{equation*}
\Sigma = \{ Q_{A,B} (s,t,h) ~|~s \in I ,~t \in J , ~h \in K \}.
\end{equation*}
Then, by Notation and Remarks \ref{no and rks:Grassmannian}.$(3)$, every element in $\Sigma$ can be written as a linear combination of elements in the set
\begin{equation*}
\Sigma' = \{ Q_{A,B} (s,t,h) ~|~s \in I' ,~t \in J' , ~h \in K' \}.
\end{equation*}
with coefficients in $\mathbb{K} [x_0 , x_1 , \ldots ,x_p ]_2 \otimes_{\mathbb{K}} \mathbb{K} [y_0 , y_1 , \ldots ,y_p ]_2 \otimes_{\mathbb{K}} \mathbb{K} [z_0 , \ldots , z_q]_2$. For example,
\begin{equation*}
\begin{CD}
Q_{A,B} (x_0 s_0 + x_1 s_1 , y_2 s_2 , z_0 h_0 +z_1 h_1  ) &\quad  = \quad& (x_0 ^2 -x_0 x_1 )y_2 ^2 Q_{A,B} (s_0 ,s_2 ,  z_0 h_0 +z_1 h_1) \\
& & + (x_1 ^2 - x_0 x_1 ) y_2 ^2 Q_{A,B} (s_1 ,s_2 ,  z_0 h_0 +z_1 h_1) \\
& & + x_0 x_1 y_2 ^2 Q_{A,B} (s_0 + s_1 , s_2 , z_0 h_0 +z_1 h_1  ).
\end{CD}
\end{equation*}
Also the quadratic polynomials
$$Q_{A,B} (s_0 ,s_2 ,  z_0 h_0 +z_1 h_1),~Q_{A,B} (s_1 ,s_2 ,  z_0 h_0 +z_1 h_1) \quad \mbox{and} \quad Q_{A,B} (s_0 + s_1 , s_2 , z_0 h_0 +z_1 h_1  )$$
are expanded into the linear combinations of elements in $\Sigma'$ with coefficients in $\mathbb{K} [z_0 , \ldots , z_q]_2$. For example,
\begin{equation*}
\begin{CD}
Q_{A,B} (s_0 ,s_2 ,  z_0 h_0 +z_1 h_1) & \quad = \quad & (z_0 ^2 - z_0 z_1 ) Q_{A,B} (s_0 ,s_2 , h_0 ) + (z_1 ^2 - z_0 z_1 ) Q_{A,B} (s_0 , s_2 , h_1 ) \\
& & \quad \quad \quad \quad \quad \quad \quad \quad \quad \quad \quad \quad + z_0 z_1 Q_{A,B} (s_0 ,s_2 , h_0 + h_1 ).
\end{CD}
\end{equation*}
Finally, each element in $\Sigma'$ can be written as a $\mathbb{K}$-linear combination of $Q_0 , Q_1 , \ldots , Q_m$. In consequence, if we write
$$Q_{A,B} (s,t,h) = G_0 Q_0 + G_1 Q_1 + \cdots + G_m Q_m$$
then $G_0 , G_1 , \ldots , G_m$ must be contained in
$$\mathbb{K} [x_0 , x_1 , \ldots ,x_p ]_2 \otimes_{\mathbb{K}} \mathbb{K} [y_0 , y_1 , \ldots ,y_p ]_2 \otimes_{\mathbb{K}} \mathbb{K} [z_0 , \ldots , z_q]_2 .$$
This completes the proof.
\end{proof}

Now we are ready to give a proof of Theorem \ref{thm:main1 rank3 case}.\\

\noindent {\bf Proof of Theorem \ref{thm:main1 rank3 case}.} By Proposition \ref{prop:coeff poly}, it holds that $G_0 , G_1 , \ldots , G_m$ are contained in
$$\mathbb{K} [x_0 , x_1 , \ldots ,x_p ]_2 \otimes_{\mathbb{K}} \mathbb{K} [y_0 , y_1 , \ldots ,y_p ]_2 \otimes_{\mathbb{K}} \mathbb{K} [z_0 , \ldots , z_q]_2 .$$
Also Lemma \ref{lem:basic properties}.$(3)$ says that
\begin{equation*}
Q_{A,B} (as+bt,cs+dt,h) = Q_{A,B} (s,t,h)
\end{equation*}
for any $(s,t,h) \in V(A,B)$ and $\left(\begin{array}{rr}
a & b \\
c & d
\end{array}\right) \in {\rm SL}_2 (\mathbb{K})$. Therefore $G_0 , G_1 , \ldots , G_m$ are contained in the intersection
\begin{equation*}
T^{{\rm SL}_2 (\mathbb{K})} \cap \mathbb{K} [x_0 , x_1 , \ldots ,x_p ]_2 \otimes_{\mathbb{K}} \mathbb{K} [y_0 , y_1 , \ldots ,y_p ]_2 \otimes_{\mathbb{K}} \mathbb{K} [z_0 , \ldots , z_q]_2
\end{equation*}
which is equal to
\begin{equation*}
 \mathbb{K} [ \{x_i y_j - x_j y_i ~|~ 0 \leq i < j \leq p \} ]_2 \otimes_{\mathbb{K}} \mathbb{K} [z_0 ,z_1 , \ldots , z_q]_2  \cong H^0 (\mathbb{G} (1,\P^p ) \times \P^q , \mathcal{O} (2,2))
\end{equation*}
by Notation and Remarks \ref{no and rks:Grassmannian}.$(1)$. Moreover,
$$\{G_0 , G_1 , \ldots , G_m \}$$
is base point free by Lemma \ref{lem:basic properties}.$(2)$. Since the line bundle $\mathcal{O} (2,2)$ of $\mathbb{G} (1,\P^p ) \times \P^q$ is very ample, this shows that the map
$$\widetilde{Q_{A,B}} : \mathbb{G} (1,\P^p ) \times \P^q \rightarrow \P^m$$
is a finite morphism and
\begin{equation*}
\widetilde{Q_{A,B}}^* (\mathcal{O}_{\P^m } (1) ) = \mathcal{O}_{\mathbb{G} (1,\P^p ) \times \P^q } (2,2).
\end{equation*}
It follows that $W(A,B)$, the image of $\widetilde{Q_{A,B}}$ in $\P^m$, is a projective variety and
$$\dim~W(A,B) = \dim~\mathbb{G} (1,\P^p ) \times \P^q = 2p+q-2.$$
This completes the proof.                      \qed

\section{Irreducible decomposition of $\Phi_3 (X)$}
\noindent Throughout this section we suppose that the projective variety $X$ is locally factorial and hence all Weil divisors of $X$ are Cartier. Let $L$ be a very line bundle on $X$ defining the linearly normal embedding
\begin{equation*}
X \subset \mathbb{P}^r ,\quad r = h^0 (X,L) -1.
\end{equation*}
Recall that $\Sigma (X,L)$ is defined to be $\{ (A,B) ~|~ (A,B) \in \Sigma (X),~L = A^2 \otimes B \}$. \\

\begin{proposition}\label{prop:decomp by irreducibles}
Let $(X,L)$ be as above. Then
\begin{equation}\label{eq:natural irred decomp}
\Phi_3 (X,L) = \bigcup_{(A,B) \in \Sigma (X,L)} W(A,B).
\end{equation}
\end{proposition}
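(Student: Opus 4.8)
The inclusion $\bigcup_{(A,B)\in\Sigma(X,L)}W(A,B)\subseteq\Phi_3(X,L)$ is already recorded in Definition and Remark~\ref{def:W(A,B)}, so the whole content of the statement is the reverse inclusion, and the plan is to attach to an arbitrary $[Q]\in\Phi_3(X,L)$ a pair $(A,B)\in\Sigma(X,L)$ together with data $(s,t,h)$ realizing $Q$ as $Q_{A,B}(s,t,h)$. First I would put $Q$ in normal form: since $X$ is irreducible and nondegenerate we have $\Phi_2(X,L)=\emptyset$, so $Q$ has rank exactly $3$, and over the algebraically closed field $\mathbb{K}$ of characteristic $\neq 2$ every rank-$3$ quadratic form equals $\ell_1\ell_2-\ell_3^2$ for three linearly independent linear forms $\ell_i\in S_1$. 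Writing $\ell_i=\varphi(\sigma_i)$ with $\sigma_i\in H^0(X,L)\setminus\{0\}$ and using that $I(X,L)_2$ is the kernel of the natural map $S_2\to H^0(X,L^2)$, under which $\varphi(\sigma)\varphi(\sigma')$ goes to the product section $\sigma\sigma'$, the condition $Q\in I(X,L)_2$ becomes the single relation $\sigma_1\sigma_2=\sigma_3^2$ in $H^0(X,L^2)$.

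The crux is to extract a common square root from this relation, and this is where local factoriality is used. Since Weil divisors on $X$ are then Cartier, set $D_i=\mathrm{div}(\sigma_i)$; these are effective divisors with $\mathcal{O}_X(D_i)\cong L$, and the relation reads $D_1+D_2=2D_3$. Let $G=\gcd(D_1,D_2)$, taken primewise. Then $D_1-G$ and $D_2-G$ have disjoint supports and add up to $2(D_3-G)$, which forces each to be even: $D_1-G=2E_1$, $D_2-G=2E_2$, and $D_3=G+E_1+E_2$ with $E_1,E_2$ effective. Passing to line bundles gives $\mathcal{O}_X(G)\otimes\mathcal{O}_X(E_1)^2\cong L\cong\mathcal{O}_X(G)\otimes\mathcal{O}_X(E_1)\otimes\mathcal{O}_X(E_2)$, the second isomorphism using $D_3\sim L$, so after cancellation $\mathcal{O}_X(E_1)\cong\mathcal{O}_X(E_2)=:A$; putting $B=\mathcal{O}_X(G)$ we get $L=A^2\otimes B$. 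I expect this to be the main obstacle: splitting $D_1$ and $D_2$ by themselves only yields line bundles with isomorphic squares, and it is essential to use the third divisor $D_3$ — which is linearly equivalent to $L$ itself, not merely satisfying $2D_3\sim 2L$ — in order to eliminate the potential $2$-torsion ambiguity and land on a single line bundle $A$. (If one additionally assumed $\mathrm{Pic}(X)$ finitely generated this unique-factorization argument could be carried out inside the Cox ring, but the divisor formulation needs no such hypothesis.)

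To finish, take the distinguished sections $s,t\in H^0(X,A)$ with $\mathrm{div}(s)=E_1$, $\mathrm{div}(t)=E_2$ and $h\in H^0(X,B)$ with $\mathrm{div}(h)=G$. Then $s^2h$, $t^2h$, $sth$ all lie in $H^0(X,A^2\otimes B)=H^0(X,L)$ and have divisors $D_1$, $D_2$, $D_3$ respectively, hence coincide with $\sigma_1$, $\sigma_2$, $\sigma_3$ up to nonzero scalars; after rescaling $s$, $t$, $h$ (possible as $\mathbb{K}$ is algebraically closed) one arranges $\sigma_1=s^2h$ and $\sigma_2=t^2h$, and then $\sigma_3^2=(sth)^2$ forces $\sigma_3=\pm sth$, the sign being absorbed into $\ell_3\mapsto-\ell_3$. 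Therefore $Q=\varphi(s^2h)\varphi(t^2h)-\varphi(sth)^2=Q_{A,B}(s,t,h)$, so $[Q]\in W(A,B)$. It remains to check $(A,B)\in\Sigma(X,L)$: we have $h^0(X,B)\geq 1$ because $h\neq 0$, and $h^0(X,A)\geq 2$ because $s$ and $t$ are linearly independent — were they proportional, $Q_{A,B}(s,t,h)$ would vanish by Lemma~\ref{lem:basic properties}.$(2)$, contradicting $Q\neq 0$. This establishes $\Phi_3(X,L)\subseteq\bigcup_{(A,B)\in\Sigma(X,L)}W(A,B)$ and hence the proposition.
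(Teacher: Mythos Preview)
Your proof is correct and follows essentially the same route as the paper: write $Q=\ell_1\ell_2-\ell_3^{\,2}$, pass to divisors on $X$ via local factoriality, and extract an effective divisor $G$ so that $D_1-G$ and $D_2-G$ are both even, producing the witness $(A,B)\in\Sigma(X,L)$. The only cosmetic differences are that the paper takes $G$ to be the reduced sum of the primes appearing with odd multiplicity in $D_1$ (equivalently in $D_2$) rather than $\gcd(D_1,D_2)$, and your explicit use of $D_3\in|L|$ to force $\mathcal{O}_X(E_1)\cong\mathcal{O}_X(E_2)$ past possible $2$-torsion in $\mathrm{Pic}(X)$ is a point the paper leaves implicit.
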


\begin{proof}
$(1)$ We know already that $W(A,B) \subset \Phi_3 (X,L)$ for every $(A,B) \in \Sigma (X,L)$. Thus it holds that
\begin{equation*}
\bigcup_{(A,B) \in \Sigma (X,L)} W(A,B) ~\subseteq ~ \Phi_3 (X,L).
\end{equation*}
Now let $Q \in I(X,L)_2$ be an element of rank $3$. Then $Q = xy-z^2$ for some linear forms $x,y,z$ on $\P^r$. Write the Weil divisors $\mbox{div}(z)$, $\mbox{div}(x)$ and $\mbox{div}(y)$ as
\begin{equation*}
\mbox{div}(z) = \sum_{i=1} ^m  d_i D_i ,\quad  \mbox{div}(x) = \sum_{i=1} ^m e_i D_i  \quad \mbox{and} \quad  \mbox{div}(y) = \sum_{i=1} ^m f_i D_i
\end{equation*}
where $D_1 , \ldots , D_m$ are prime divisors on $X$. Suppose that $e_1 , \ldots , e_{\ell}$ are odd and $e_{\ell+1} , \ldots , e_m$ are even. Since $xy = z^2$, it follows that $f_i$ is odd if and only if $1\leq i \leq \ell$. Therefore
\begin{equation*}
L \otimes \mathcal{O}_X (-D_1 - \cdots - D_{\ell} ) = A^2
\end{equation*}
where $H^0 (X,A)$ contains two sections, say $s$ and $t$, such that
\begin{equation*}
\mbox{div}(s) = \sum_{i=1} ^{\ell}  \frac{e_i -1}{2} D_i + \sum_{i=\ell+1} ^m  \frac{e_i}{2} D_i \quad \mbox{and} \quad \mbox{div}(t) =  \sum_{i=1} ^{\ell}  \frac{f_i -1}{2} D_i + \sum_{i=\ell+1} ^m  \frac{f_i}{2} D_i .
\end{equation*}
Now, let $B$ denote the line bundle $\mathcal{O}_X ( D_1 + \cdots + D_{\ell} )$ and let $h \in H^0 (X,B)$ be the element corresponding to the divisor $D_1 + \cdots + D_{\ell}$. Then we have
\begin{equation*}
L = A^2 \otimes B \quad \mbox{and} \quad Q = Q_{A,B} (s,t,h).
\end{equation*}
Since $Q$ is nonzero it follows that $s$ and $t$ are $\mathbb{K}$-linearly independent and $h$ is nonzero. Thus $h^0 (X,A) \geq 2$ and $h^0 (X,B) \geq 1$. Therefore it is shown that $[Q] \in W(A,B)$. This completes the proof of the desired equality in (\ref{eq:natural irred decomp}).
\end{proof}
\smallskip

From now on we suppose that ${\rm Pic}(X)$ is a finitely generated free group. Then the Cox ring of $X$ defined as
\begin{equation*}
{\rm Cox}(X) = \bigoplus_{\mathcal{L} \in {\rm Pic}(X)} H^0 (X,\mathcal{L})
\end{equation*}
is a unique factorization domain (cf. \cite{Ar}, \cite{BH} or \cite{EKW}).

\begin{proposition}\label{prop:distinguishing}
Let $(X,L)$ be as above and let $(A,B) \in \Sigma (X,L)$ be such that there exist $s,t \in H^0 (X,A)$ and $h \in H^0 (X,B)$ where
\begin{enumerate}
\item[$(i)$] $s$ and $t$ are irreducible elements in ${\rm Cox}(X)$,
\item[$(ii)$] either $B = \mathcal{O}_X$ or else $h$ is an irreducible element in ${\rm Cox}(X)$ and
\item[$(iii)$] $Q_{A,B} (s,t,h) \neq 0$.
\end{enumerate}
Then the following statements hold:
\smallskip

\renewcommand{\descriptionlabel}[1]%
             {\hspace{\labelsep}\textrm{#1}}
\begin{description}
\setlength{\labelwidth}{13mm}
\setlength{\labelsep}{1.5mm}
\setlength{\itemindent}{0mm}

\item[$(1)$] If $[Q_{A,B} (s,t,h)] \in W (C,D)$ for some $(C,D) \in \Sigma (X,L)$, then $(C,D)=(A,B)$.

\item[$(2)$] If $[Q_{A,B} (s,t,h)] = [Q_{A,B} (u,v,w)]$ in $\P \left( I(X,L)_2 \right)$
for some $(u,v,w) \in V(A,B)$, then $\langle s, t \rangle = \langle u,v \rangle$ and $\langle h \rangle = \langle w \rangle$.
\end{description}
\end{proposition}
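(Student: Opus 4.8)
To prove Proposition~\ref{prop:distinguishing} I would work inside the Cox ring and use one intrinsic feature of a rank-$3$ quadric: a quadratic form $Q$ on $\P^r$ involves only the linear forms lying in a unique smallest subspace $U\subseteq S_1$, namely the span $U=\langle \partial Q/\partial x_0,\dots,\partial Q/\partial x_r\rangle$ of its partial derivatives, which has dimension ${\rm rank}(Q)$ and is unchanged when $Q$ is rescaled. Both statements then follow from a single input, an equality $Q_{A,B}(s,t,h)=c\,Q_{C,D}(u,v,w)$ with $c\in\mathbb{K}^*$: this is part~(1) with $(C,D)\in\Sigma(X,L)$ arbitrary and $[Q_{A,B}(s,t,h)]\in W(C,D)$, and part~(2) with $(C,D)=(A,B)$ and $(u,v,w)\in V(C,D)$. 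First I would check that $\varphi(s^2h),\varphi(t^2h),\varphi(sth)$, and likewise $\varphi(u^2w),\varphi(uvw),\varphi(v^2w)$, are $\mathbb{K}$-linearly independent; this uses hypothesis~$(iii)$, Lemma~\ref{lem:basic properties}.$(2)$, and the integrality of ${\rm Cox}(X)$. Hence both quadrics have rank exactly $3$ and their essential spaces $U$ are defined, and since the two quadrics are proportional these spaces coincide; applying $\varphi^{-1}$ yields the equality of $3$-dimensional subspaces of $H^0(X,L)\subset {\rm Cox}(X)$
\begin{equation*}
\langle s^2h,\; sth,\; t^2h\rangle \;=\; \langle u^2w,\; uvw,\; v^2w\rangle .
\end{equation*}

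Next I would compare greatest common divisors in the unique factorization domain ${\rm Cox}(X)$. Since $s$ and $t$ are linearly independent irreducibles they are non-associate, so the gcd of the left-hand space is $h$ up to a unit; on the other hand a short computation shows the gcd of the right-hand space is $w\cdot\gcd(u,v)^2$ up to a unit. Hence $h=\lambda\,w\,\gcd(u,v)^2$ for some $\lambda\in\mathbb{K}^*$. If $B=\mathcal{O}_X$ then $h$ is a unit, forcing $w$ and $\gcd(u,v)$ to be units; if $h$ is irreducible, then $\gcd(u,v)$ is again forced to be a unit. In either case $\gcd(u,v)$ is a unit and $w=\mu\,h$ for some $\mu\in\mathbb{K}^*$; in particular $\langle h\rangle=\langle w\rangle$, and since $h$ and $w$ are nonzero scalar multiples of one another they are homogeneous of the same ${\rm Pic}(X)$-degree, i.e. $D=B$. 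For part~(1) this gives $C^2=L\otimes D^{-1}=L\otimes B^{-1}=A^2$ in ${\rm Pic}(X)$, and as ${\rm Pic}(X)$ is free, hence torsion-free, $C=A$; so $(C,D)=(A,B)$.

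For part~(2) we already have $(C,D)=(A,B)$, and substituting $w=\mu h$ and cancelling the nonzero factor $h$ in the domain ${\rm Cox}(X)$ turns the displayed identity into $\langle s^2,\;st,\;t^2\rangle=\langle u^2,\;uv,\;v^2\rangle$ inside $H^0(X,A^2)$. In particular $s^2\in\langle u^2,uv,v^2\rangle$, say $s^2=au^2+buv+cv^2$ with $(a,b,c)\neq 0$; factoring this binary quadratic over the algebraically closed field $\mathbb{K}$ gives $s^2=p_1p_2$ with $p_1,p_2\in\langle u,v\rangle$, and $p_1,p_2\neq 0$ because $u,v$ are linearly independent. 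As $s$ is prime it divides one of the factors, say $p_1=s\,g$; since $p_1$ and $s$ are homogeneous of degree $A$ for the ${\rm Pic}(X)$-grading, $g$ is homogeneous of degree $\mathcal{O}_X$ and hence a nonzero scalar, so $s\in\langle u,v\rangle$. Running the same argument with $t$ gives $t\in\langle u,v\rangle$, and since $s,t$ are linearly independent while $\dim\langle u,v\rangle=2$, we conclude $\langle s,t\rangle=\langle u,v\rangle$. Together with $\langle h\rangle=\langle w\rangle$ this proves part~(2).

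The gcd bookkeeping and the computations with the ${\rm Pic}(X)$-grading are routine; the step I expect to be the main obstacle is the last one, where the \emph{pencil} $\langle s,t\rangle$ itself, and not just the bundle $A$, must be recovered from the space $\langle s^2,st,t^2\rangle$ of its squares. This is exactly the place where the irreducibility of $s$ and $t$ enters essentially: if $s$ were reducible, a factorization $s^2=p_1p_2$ with $p_i\in\langle u,v\rangle$ need not yield a factor proportional to $s$, and then $\langle s,t\rangle=\langle u,v\rangle$ can fail, consistently with the fact (governed by Definition~\ref{def:irreducible}) that $W(A,B)$ is not always an irreducible component of $\Phi_3(X,L)$. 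A smaller technical point to dispatch first is the claim in the reduction step that both quadrics have rank exactly $3$, which is what makes their essential spaces well defined.
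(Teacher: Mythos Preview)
Your argument is correct and takes a genuinely different route from the paper's. The paper works with the four--element space $V=\langle \varphi(s^2h),\varphi(t^2h),\varphi(u^2w),\varphi(v^2w)\rangle$, observes that the difference $G=\varphi(s^2h)\varphi(t^2h)-\varphi(u^2w)\varphi(v^2w)$ has rank at most $2$, and then splits into the cases $\dim_{\mathbb{K}} V=2$ and $\dim_{\mathbb{K}} V=3$; the second case requires an explicit factorisation of $G$ and a somewhat delicate elimination leading to identities such as $s(au+\lambda v)=t(\lambda bu+v)$, from which the divisibility conclusions are drawn. By contrast, you invoke the intrinsic \emph{essential subspace} of a rank--$3$ quadric to obtain directly the equality of three--dimensional subspaces $\langle s^2h, sth, t^2h\rangle=\langle u^2w, uvw, v^2w\rangle$, after which a single gcd comparison in the UFD ${\rm Cox}(X)$ replaces the entire case analysis. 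Your recovery of $\langle s,t\rangle$ from $\langle s^2,st,t^2\rangle$ via the factorisation $s^2=p_1p_2$ with $p_i\in\langle u,v\rangle$ is also cleaner than the paper's manipulation, and it isolates more transparently the exact point where irreducibility of $s$ and $t$ is used. The paper's approach, on the other hand, never needs to check that $u^2w,uvw,v^2w$ are themselves linearly independent (only that the two quadrics are nonzero), so it is marginally lighter on the preliminary verification; but overall your argument is shorter, coordinate--free, and avoids the $\dim V=3$ computation entirely.
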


\begin{proof}
Suppose that $[Q_{A,B} (s,t,h)] \in W(C,D)$ for some $(C,D) \in \Sigma (X,L)$. That is,
\begin{equation*}
Q_{A,B} (s,t,h)= Q_{C,D} (u,v,w)
\end{equation*}
for some $u,v \in H^0 (X,C)$ and $w \in H^0 (X,D)$. Then we get the equality
\begin{equation*}
G := \varphi (s^2 h) \varphi (t^2 h) - \varphi (u ^2 w) \varphi (v^2 w)   = \left( \varphi (st h)-\varphi (uv w) \right) \left( \varphi (s t h)+\varphi (uv w) \right)
\end{equation*}
We will show that $(A,B) = (C,D)$. Consider the subspace
$$V = \langle \varphi (s^2 h) , \varphi(t^2 h) , \varphi(u^2 w ) ,\varphi (v^2 w) \rangle$$
of $H^0 (\P^r , \mathcal{O}_{\P^r} (1))$. Note that
$$2 \leq \dim_{\mathbb{K}}~ V \leq 3$$
since $\{ \varphi (s^2 h) , \varphi(t^2 h) \}$ and $\{ \varphi(u^2 w ) ,\varphi (v^2 w)\}$ are respectively $\mathbb{K}$-linearly independent and the rank of $G$ is at most $2$.

If $\dim_{\mathbb{K}} V = 2$ then $V = \langle \varphi (s^2 h) , \varphi(t^2 h) \rangle = \langle \varphi(u^2 w ) ,\varphi (v^2 w)\rangle$ and hence
$$\langle  s^2 h , t^2 h \rangle = \langle u^2 w   , v^2 w \rangle$$
since $\varphi : H^0 (X,L) \rightarrow H^0 (\P^r , \mathcal{O}_{\P^r} (1))$ is an isomorphism. Then we get
$$h = \gcd (s^2 h , t^2 h) = \gcd (u^2 w , v^2 w ) = w \times \gcd (u^2 ,v^2 ).$$
If $B = \mathcal{O}_X$ then $h$ is a nonzero constant and hence so is $w$. Thus
\begin{equation*}
D = \mathcal{O}_X ~\mbox{and}~ L = A^2 = C^2 .
\end{equation*}
In consequence, $(A,B) = (C,D)$ since ${\rm Pic}(X)$ is torsion free. Next, if $h$ is irreducible then $h = cw$ for some $c \in \mathbb{K}^*$ and hence $B=D$. Again this implies that $(A,B) = (C,D)$.

Now, suppose that $\dim_{\mathbb{K}} V = 3$. We may assume that
$$V = \langle \varphi (s^2 h) , \varphi(t^2 h) , \varphi(u^2 w ) \rangle$$
and
$$\varphi(v^2 w ) = a\varphi (s^2 h) +b \varphi(t^2 h) +c \varphi(u^2 w )$$
for some $a,b,c \in \mathbb{K}$. Since $V$ is $\mathbb{K}$-linearly independent and $G$ is of rank at most $2$, it must be true that $c+ab=0$ and $G$ is factored into
$$G = \left( a\varphi (u^2 w)-\varphi (t^2 h) \right) \left( b \varphi (u^2 w)-\varphi (s^2 h) \right).$$
Therefore there is an element $\lambda \in \mathbb{K}^*$ such that either
$$(i)~ \begin{cases}
a\varphi (u^2 w)-\varphi (t^2 h) =   \lambda \varphi (s  t h)- \lambda \varphi (u v w) \\
b \varphi (u^2 w)-\varphi (s^2 h) =  \lambda^{-1} \varphi (st h)+ \lambda^{-1} \varphi (uv w)
\end{cases}$$
or else
$$(ii)~ \begin{cases}
a\varphi (u^2 w)-\varphi (t^2 h) =   \lambda \varphi (sth)+ \lambda \varphi (uvw) \\
b \varphi (u^2 w)-\varphi (s^2 h) =  \lambda^{-1} \varphi (sth)- \lambda^{-1} \varphi (uvw)
\end{cases}.$$
In case $(i)$, it follows that
$$(i)' ~ \begin{cases}
a u^2 w - t^2 h  =   \lambda sth  - \lambda uvw \\
b u^2 w - s^2 h  =  \lambda^{-1} sth + \lambda^{-1}  uvw
\end{cases}$$
and hence, by eliminating the term $uvw$, we get
\begin{equation}\label{eqn:useful factorization 1}
(\lambda s + t)^2 h = (a+\lambda^2 b )u^2 w.
\end{equation}
Also from $(i)'$ we get the equalities
\begin{equation*}
th (\lambda s + t) = uw (au+\lambda v) ~ \mbox{and} ~ sh (\lambda s +t) = uw (\lambda bu + v ).
\end{equation*}
Thus we get
\begin{equation*}
sth (\lambda s + t) = suw (au+\lambda v) =tuw (\lambda bu + v )
\end{equation*}
and hence
\begin{equation}\label{eqn:useful factorization 2}
s (au+\lambda v) =t (\lambda bu + v ).
\end{equation}
This implies that $s$ and $t$ divide respectively $\lambda bu + v$ and $au+\lambda v$ since they are irreducible and $\mathbb{K}$-linearly independent. In particular, it holds that
\begin{equation*}
C = A \otimes E ~\mbox{for some}~E \in {\rm Pic}(X)~\mbox{with}~h^0 (X,E) \geq 1.
\end{equation*}
Then $B = E^2 \otimes D$. If $B = \mathcal{O}_X$ then $E = D = \mathcal{O}_X$ since $E$ and $D$ have nonzero sections. If $h$ is irreducible then it follows from (\ref{eqn:useful factorization 1}) that $h$ divides $w$ and hence
\begin{equation*}
D = B \otimes F ~\mbox{for some}~F \in {\rm Pic}(X)~\mbox{with}~h^0 (X,F) \geq 1.
\end{equation*}
Then $E^2 \otimes F = \mathcal{O}_X$ and so it follows that $E = F = \mathcal{O}_X$ since $E$ and $F$ have nonzero sections. In consequence we obtain the desired equality $(A,B) = (C,D)$. We can deal case $(ii)$ in a similar way.

\noindent $(2)$ We follow the proof of $(1)$.

Suppose that $\dim_{\mathbb{K}} V = 2$. If $B = \mathcal{O}_X$ then $h$ and $w$ are nonzero constants. If $h$ is irreducible then $h = cw$ for some $c \in \mathbb{K}^*$. Therefore it holds that $\langle s^2 , t^2 \rangle = \langle u^2 ,v^2 \rangle$. Then one can show the desired equality $\langle s , t \rangle = \langle u ,v \rangle$.

Suppose that $\dim_{\mathbb{K}} V = 3$. For case $(i)$ in the proof of $(1)$, it follows from (\ref{eqn:useful factorization 2}) that $s$ and $t$ are constant multiples of $au+\lambda v$ and $\lambda bu + v $ since they are irreducible and $\mathbb{K}$-linearly independent. Thus we get the desired equality $\langle s , t \rangle = \langle u ,v \rangle$. The case $(ii)$ in the proof of $(1)$ can be dealt in a similar way.
\end{proof}

\begin{definition}
Let $(X,L)$ be as above. We say that an element $(A,B) \in \Sigma (X,L)$ is \textit{irreducible} if a general member of $H^0 (X,A)$ is irreducible in $\mbox{Cox}(X)$ and if either $B = \mathcal{O}_X$ or else a general member of $H^0 (X,B)$ is irreducible in $\mbox{Cox}(X)$.
\end{definition}

\begin{corollary}\label{cor:generically one to one}
Let $(A,B)$ be an irreducible member of $\Sigma (X,L)$ such that $p = h^0 (X,A)-1$ and $q = h^0 (X,B)-1$. Then the morphism
$$\widetilde{Q_{A,B}} : \mathbb{G} (1,\P^p ) \times \P^q \rightarrow \P ( I(X,L)_2 )$$
is generically injective. In particular, the degree of the subvariety $W(A,B) \subset \P ( I(X,L)_2 )$ is given by
\begin{equation*}
\deg \left( W(A,B)\right) = \deg \left( \mathcal{O}_{\mathbb{G}(1,\P^p ) \times \P^q } (2,2) \right)  = \frac{ 2^{2p+q-2}}{ p}  {{2p+q-2} \choose {2p-2}}  {{2p-2} \choose {p-1}}.
\end{equation*}
\end{corollary}

\begin{corollary}\label{cor:generically one to one}
Let $(A,B)$ be an irreducible member of $\Sigma (X,L)$ such that $p = h^0 (X,A)-1$ and $q = h^0 (X,B)-1$. Then the morphism
$$\widetilde{Q_{A,B}} : \mathbb{G} (1,\P^p ) \times \P^q \rightarrow \P ( I(X,L)_2 )$$
is generically injective. In particular, the degree of the subvariety $W(A,B) \subset \P ( I(X,L)_2 )$ is given by
\begin{equation*}
\deg \left( W(A,B)\right) = \deg \left( \mathcal{O}_{\mathbb{G}(1,\P^p ) \times \P^q } (2,2) \right)  = \frac{ 2^{2p+q-2}}{ p}  {{2p+q-2} \choose {2p-2}}  {{2p-2} \choose {p-1}}.
\end{equation*}
\end{corollary}

\begin{proof}
Let $\mathcal{U} \subset V(A,B)$ be an open subset such that every member of $\mathcal{U}$ satisfies the above two conditions. Then Proposition \ref{prop:distinguishing}.$(2)$ shows that $\widetilde{Q_{A,B}}$ is injective on $\mathcal{U}$ and hence it is generically injective. By combining this result with Theorem \ref{thm:main1 rank3 case}, we have
\begin{equation*}
\deg \left( W(A,B) \right) =  \deg \left( \mathcal{O}_{\mathbb{G} (1,\P^p ) \times \P^q }(2,2) \right) .
\end{equation*}
Since ${\rm dim} ~\mathbb{G} (1,\P^p ) \times \P^q = 2p+q-2$ and $\mathcal{O}_{\mathbb{G} (1,\P^p ) \times \P^q }(2,2) = \mathcal{O}_{\mathbb{G} (1,\P^p ) \times \P^q }(1,1)^2$, we have
\begin{equation*}
\deg \left( \mathcal{O}_{\mathbb{G} (1,\P^p ) \times \P^q }(2,2) \right) = 2^{2p+q-2} \times \deg \left( \mathcal{O}_{\mathbb{G} (1,\P^p ) \times \P^q }(1,1) \right).
\end{equation*}
Also, it holds that
\begin{equation*}
\deg \left( \mathcal{O}_{\mathbb{G} (1,\P^p ) \times \P^q }(1,1) \right) =  {{2p+q-2} \choose {2p-2}} \times \deg \left( \mathcal{O}_{\mathbb{G} (1,\P^p )}(1) \right) \times \deg \left( \mathcal{O}_{ \P^q }(1) \right).
\end{equation*}
since the Hilbert polynomial of $(\mathbb{G} (1,\P^p ) \times \P^q , \mathcal{O}_{\mathbb{G} (1,\P^p ) \times \P^q }(1,1) )$ is equal to the product of those of $( \mathbb{G} (1,\P^p ) , \mathcal{O}_{\mathbb{G} (1,\P^p )}(1) ))$ and $( \P^q , \mathcal{O}_{ \P^q }(1) )$. Finally, it holds that
\begin{equation*}
 \deg \left( \mathcal{O}_{\mathbb{G} (1,\P^p )}(1) \right) = \frac{1}{p} \times {{2p-2} \choose {p-1}}
\end{equation*}
(cf. \cite[Chapter 19, Page 247]{Harris}). This proves the formula of $\deg (W(A ,B) )$.
\end{proof}

\noindent {\bf Proof of Theorem \ref{thm:main2}.} By Theorem \ref{thm:main1}.$(2)$, the following is an irreducible decomposition of $\Phi_3 (X,L)$.
\begin{equation*}
\Phi_3 (X,L) ~=~ \bigcup_{(A,B) \in \Sigma (X,L)} W(A,B)
\end{equation*}
Now suppose that $(A,B) \in \Sigma (X,L)$ is irreducible. Then Proposition \ref{prop:distinguishing}.$(1)$ implies that if $(C,D) \neq (A,B)$ then $W(C,D)$ does not contain $W(A,B)$. This shows that $W(A,B)$ is an irreducible component of $\Phi_3 (X,L)$. In particular, if every $(A,B) \in \Sigma (X,L)$ is irreducible then the above irreducible decomposition of $\Phi_3 (X,L)$ is already minimal.      \qed \\

The rest of this section will be devoted to describing $\Phi_3 (V_{n,2})$ where
$$V_{n,2} = \nu_2 (\P^n ) \subset \P^{r},~r={{n+2} \choose {2}}-1,$$
is the second Veronese variety corresponding to $(\P^n , \mathcal{O}_{\P^n} (2))$.

\begin{notation and remarks}
$(1)$ Let $A$ be the very ample line bundle on the Grassmannian manifold $\mathbb{G} (1,\P^n )$ which generates the group $\mbox{Pic} (\mathbb{G} (1,\P^n ))$. Also for each $d \geq 1$, let
\begin{equation*}
\mathbb{G}_d \subset \P H^0 (\mathbb{G} (1,\P^n ),A^d )
\end{equation*}
be the linearly normal embedding of $\mathbb{G} (1,\P^n )$ by $A^d$. For example,
$$\mathbb{G}_1 \subset \P H^0 (\mathbb{G} (1,\P^n ),A) = \P^{{{n+1} \choose {2}}-1}$$
is called the Pl\"{u}cker embedding of $\mathbb{G} (1,\P^n )$. Note that
\begin{equation}\label{eqn:dimensions}
h^0 (\mathbb{G} (1,\P^n ),A^2 ) = {{n+1} \choose {2}} {{n+2} \choose {2}} = \dim_{\mathbb{K}} ~I(V_{n,2} )_2  .
\end{equation}
Thus $\mathbb{G}_2$ and $\Phi_3 (V_{n,2})$ are contained in the same projective space.
\end{notation and remarks}
\smallskip

\begin{theorem}\label{thm:2nd Veronese}
Suppose that $\mbox{char}(\mathbb{K}) \neq 2,3$. Then $\Phi_3 (V_{n,2} ) \subset \P \left( I(V_{n,2} )_2 \right)$ is projectively equivalent to $\mathbb{G}_2 \subset \P H^0 (\mathbb{G} (1,\P^n ),A^2 )$.
\end{theorem}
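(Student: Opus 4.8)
The plan is to identify $\Phi_3(V_{n,2})$ with the image of the morphism $\tilde{Q_{A,B}}$ of Section~2 for the single relevant pair in $\Sigma(\P^n,\mathcal{O}_{\P^n}(2))$, and then to recognize that morphism as the embedding of $\mathbb{G}(1,\P^n)$ by the complete linear system $|A^2|$, where $A$ is the ample generator of ${\rm Pic}(\mathbb{G}(1,\P^n))$. First I would compute $\Sigma(\P^n,\mathcal{O}_{\P^n}(2))$. As $\P^n$ is smooth it is locally factorial, and ${\rm Pic}(\P^n)=\Z\cdot\mathcal{O}_{\P^n}(1)$ is finitely generated, so Theorem~\ref{thm:main1} applies. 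Writing the two line bundles of a putative pair as $\mathcal{O}_{\P^n}(a)$ and $\mathcal{O}_{\P^n}(b)$: the condition $h^0(\mathcal{O}_{\P^n}(a))\geq 2$ forces $a\geq 1$, the condition $h^0(\mathcal{O}_{\P^n}(b))\geq 1$ forces $b\geq 0$, and then $2a+b=2$ forces $(a,b)=(1,0)$. Thus $\Sigma(\P^n,\mathcal{O}_{\P^n}(2))=\{(\mathcal{O}_{\P^n}(1),\mathcal{O}_{\P^n})\}$, and Theorem~\ref{thm:main1}.(2) gives $\Phi_3(V_{n,2})=W(\mathcal{O}_{\P^n}(1),\mathcal{O}_{\P^n})$.

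Next I would unwind the construction of Section~2 for this pair. Here $p=h^0(\P^n,\mathcal{O}_{\P^n}(1))-1=n$ and $q=h^0(\P^n,\mathcal{O}_{\P^n})-1=0$, so the source of $\tilde{Q_{A,B}}$ is $\mathbb{G}(1,\P^n)\times\P^0=\mathbb{G}(1,\P^n)$, and Theorem~\ref{thm:main1 rank3 case} gives a finite morphism $\tilde{Q_{A,B}}:\mathbb{G}(1,\P^n)\to\P^m=\P(I(V_{n,2})_2)$ with $\tilde{Q_{A,B}}^{*}\mathcal{O}_{\P^m}(1)=\mathcal{O}_{\mathbb{G}(1,\P^n)\times\P^0}(2,2)=A^2$, whose image is $W(\mathcal{O}_{\P^n}(1),\mathcal{O}_{\P^n})=\Phi_3(V_{n,2})$. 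By Notation and Remarks~\ref{no and rks:Grassmannian}.(2) this morphism is the one associated to the linear subsystem $V:=\langle G_0,\dots,G_m\rangle\subseteq H^0(\mathbb{G}(1,\P^n),A^2)$ spanned by the coefficient polynomials occurring in $Q_{A,B}(s,t,h)=\sum_i G_i Q_i$.

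The crucial step is to prove that $V$ is all of $H^0(\mathbb{G}(1,\P^n),A^2)$. On one hand, since ${\rm char}(\mathbb{K})\neq 2,3$, Theorem~\ref{thm:HLMP}.(1) says $(\P^n,\mathcal{O}_{\P^n}(2))$ satisfies property $\QR(3)$, i.e.\ $\Phi_3(V_{n,2})$ is nondegenerate in $\P^m$; and the image of a morphism to $\P^m$ defined by sections $G_0,\dots,G_m$ is nondegenerate if and only if $G_0,\dots,G_m$ are $\mathbb{K}$-linearly independent, so $\dim_{\mathbb{K}}V=m+1$. On the other hand $m+1=\dim_{\mathbb{K}}I(V_{n,2})_2=h^0(\mathbb{G}(1,\P^n),A^2)$ by (\ref{eqn:dimensions}). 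Hence $V=H^0(\mathbb{G}(1,\P^n),A^2)$, so $\tilde{Q_{A,B}}$ is the morphism attached to the complete linear system $|A^2|$; since $A^2$ is very ample on $\mathbb{G}(1,\P^n)$ this morphism is a closed immersion, and its image is by definition $\mathbb{G}_2\subset\P H^0(\mathbb{G}(1,\P^n),A^2)$ up to the choice of a basis of $H^0(\mathbb{G}(1,\P^n),A^2)$. As the passage from $\{G_0,\dots,G_m\}$ to any such basis is realized by an element of ${\rm GL}(H^0(\mathbb{G}(1,\P^n),A^2))$, the subvarieties $\Phi_3(V_{n,2})=W(\mathcal{O}_{\P^n}(1),\mathcal{O}_{\P^n})$ and $\mathbb{G}_2$ are projectively equivalent, which is the assertion.

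I expect the only substantive point to be the simultaneous use of the nondegeneracy furnished by $\QR(3)$ and the numerical coincidence (\ref{eqn:dimensions}): together they force the a priori partial linear system $V$ to be complete, after which very ampleness of $A^2$ closes everything formally. One minor caveat is that ``finite morphism'' does not by itself yield injectivity of $\tilde{Q_{A,B}}$, but once $V$ is known to equal $H^0(A^2)$ the very ampleness of $A^2$ upgrades $\tilde{Q_{A,B}}$ to a closed immersion directly, so no separate injectivity argument is needed; alternatively, since $(\mathcal{O}_{\P^n}(1),\mathcal{O}_{\P^n})$ is evidently an irreducible member of $\Sigma$, Corollary~\ref{cor:generically one to one} already supplies generic injectivity as a consistency check.
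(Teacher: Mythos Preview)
Your proof is correct and follows essentially the same route as the paper: identify $\Sigma(\P^n,\mathcal{O}_{\P^n}(2))=\{(\mathcal{O}_{\P^n}(1),\mathcal{O}_{\P^n})\}$ so that $\Phi_3(V_{n,2})=W(\mathcal{O}_{\P^n}(1),\mathcal{O}_{\P^n})$, invoke Theorem~\ref{thm:main1 rank3 case} to realize this as the image of $\mathbb{G}(1,\P^n)$ under a linear subsystem of $|A^2|$, and then combine nondegeneracy from property $\QR(3)$ with the dimension count (\ref{eqn:dimensions}) to force the subsystem to be complete. The only cosmetic difference is that the paper phrases the first step via Theorem~\ref{thm:main2} and cites \cite[Theorem~1.1]{HLMP} directly for nondegeneracy, whereas you use Theorem~\ref{thm:main1}.(2) and Theorem~\ref{thm:HLMP}.(1); your added remark that very ampleness of $A^2$ upgrades the finite morphism to a closed immersion is a welcome clarification the paper leaves implicit.
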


\begin{proof}
Theorem \ref{thm:main2} shows that $\Phi_3 (V_{n,2} )$ is irreducible since its minimal irreducible decomposition is
$$\Phi_3 (V_{n,2} ) = W(\mathcal{O}_{\P^n} (1), \mathcal{O}_{\P^n} ).$$
Also Theorem \ref{thm:main1 rank3 case} shows that there is a finite morphism
\begin{equation*}
Q_{\mathcal{O}_{\P^n} (1) ,\mathcal{O}_{\P^n} } : \mathbb{G} (1,\P^n ) \rightarrow \P \left( I(V_{n,2} )_2 \right) =  \P^{{{n+1} \choose {2}} {{n+2} \choose {2}}-1}.
\end{equation*}
which is defined by a linear subsystem of $|A^2 |$. The image of this morphism is
$$W(\mathcal{O}_{\P^n} (1), \mathcal{O}_{\P^n} )$$
which is nondegenerate in $ \P \left( I(V_{n,2} )_2 \right)$ by \cite[Theorem 1.1]{HLMP}.
By this fact and (\ref{eqn:dimensions}), we can conclude that the above morphism is defined by the complete linear system $|A^2 |$. Therefore $W(\mathcal{O}_{\P^n} (1), \mathcal{O}_{\P^n} )$ is equal to $\mathbb{G}_2$.
\end{proof}

\section{Examples}
\noindent Let $X \subset \mathbb{P}^r$ be a linearly normal projective variety defined by a very ample line bundle $L$ on $X$. By Theorem \ref{thm:main1} and Theorem \ref{thm:main2}, we obtain the irreducible decomposition of $\Phi_3 (X,L)$ by the projective varieties $W(A,B)$'s for $(A,B) \in \Sigma (X,L)$ when $X$ is locally factorial and ${\rm Pic} (X)$ is finitely generated. Along this line, this section is devoted to providing examples about the case where $X$ is an irrational curve and hence ${\rm Pic}(X)$ is not finitely generated.\\

Let $C \subset \P^4$ be an elliptic normal curve of degree $5$. Then $I(C)$ is generated by $5$ quadratic polynomials. Moreover, it satisfies property $\QR (3)$ by Theorem 1.1 in \cite{PaCurve}. If $\mathcal{O}_C (1) = A^2 \otimes B$ for some $A,B \in \mbox{Pic}(C)$ such that $h^0 (C,A) \geq 2$ and $h^0 (C,B) \geq 1$, then
$$B = \mathcal{O}_C (x) ~ \mbox{for some} ~ x \in C ~ \mbox{and $A$ is a line bundle of degree $2$.}$$
Conversely, for any $x \in C$ the line bundle $\mathcal{O}_C (1) \otimes \mathcal{O}_C (-x)$ is a square of a line bundle of degree $2$. Thus it holds that
\begin{equation*}
\Sigma (C,\mathcal{O}_C (1)) = \{ (A,\mathcal{O}_C (x)) ~|~ A^2 = \mathcal{O}_C (1) \otimes \mathcal{O}_C (-x) \}.
\end{equation*}
In particular, there is a $4:1$ map from $\Sigma (C,\mathcal{O}_C (1))$ to $C$. Theorem \ref{thm:main1} says that for any $(A,B) \in \Sigma (C,\mathcal{O}_C (1))$, the set $W(A,B)$ is the image of
$$\mathbb{G} (1,\P H^0 (C,A)) \times \P H^0 (C,B) = \mathbb{G} (1,\P^1 ) \times \P^0$$
and hence it is a single point. In the following example, we will see that $\Phi_3 (C)$ is an irreducible curve. In consequence, the irreducible components of $\Phi_3 (C)$ are not of the form $W(A,B)$ for some $(A,B) \in \Sigma (C,\mathcal{O}_C (1))$.

\begin{example}
For the smooth rational normal surface scroll
$$S = S(1,2) = \{[sx:tx:s^2 y :sty:t^2 y] ~|~s,t,x,y \in \mathbb{K} \} \subset \P^4 ,$$
the homogeneous ideal $I(S)$ is generated by the $2 \times 2$ minors of the matrix
$$\left(\begin{array}{rrrr}
z_0 & z_2 &  z_3   \\
z_1 & z_3 &  z_4
\end{array}\right).$$
Namely, $I(S) = \langle Q_1 , Q_2 , Q_3 \rangle$ where
\begin{equation*}
\begin{CD}
Q_1 = z_0 z_3 -z_1 z_2 ,~Q _{2} =z _{0} z _{4} -z _{1} z_3 \quad \mbox{and} \quad Q _{3} =z _{2} z _{4} -z _3 ^2 .
\end{CD}
\end{equation*}
Let $C$ be a divisor of $S$ defined by $sx^2 + (s^2 +t^2 ) xy + (s^3 +  t^3 )y^2$ in $S$. Then one can check the followings:\\

\renewcommand{\descriptionlabel}[1]%
             {\hspace{\labelsep}\textrm{#1}}
\begin{description}
\setlength{\labelwidth}{13mm}
\setlength{\labelsep}{1.5mm}
\setlength{\itemindent}{0mm}

\item[$(i)$] $C \equiv 2H-F$ where $H$ and $F$ denote respectively the hyperplane section and a ruling of $S$.

\item[$(ii)$] $I(C) = \langle Q_1 , Q_2 , Q_3 ,Q_4 , Q_5 \rangle$ where
\begin{equation*}
\quad \quad Q_4 = z_0 ^2  + z_0 z_2 +  z_0 z_4 + z_2 ^2  +  z_3 z_4 \quad \mbox{and} \quad Q _{5} = z_0 z_1   + z_1 z_2 +  z_1 z_4 + z_2 z_3   +   z_4 ^2 .
\end{equation*}

\item[$(iii)$] $C$ is smooth and hence $C \subset \P^4$ is an elliptic normal curve of degree $5$.\\
\end{description}
The $5 \times 5$ symmetric matrix $M$ associated to $Q  = x_1 Q_1 + x_2 Q_2 + x_3 Q_3 + x_4 Q_4 + x_5 Q_5$
is written as
$$M  = \left(\begin{array}{rrrrrr}
2x_4   & x_5   & x_4 & x_1    & x_2 +x_4    \\
x_5 & 0 & x_5 -x_1   & -x_2 & x_5   \\
x_4 & x_5 -x_1   & 2x_4   & x_5 & x_3    \\
x_1   & -x_2 & x_5 & -2x_3 & x_4   \\
x_2 +x_4 & x_5 & x_3 & x_4 & 2x_5
\end{array}\right).$$
Then it can be checked by ``Macaulay 2" that $\sqrt{I(4,M)}$ is a homogeneous prime ideal which is generated by $5$ cubic polynomials and $10$ quartic polynomials. Furthermore it holds that
$$\Phi_3 (C) = V(\sqrt{I(4,M)}) \subset \P^4$$
is a smooth elliptic curve of degree $10$. In particular, $\Phi_3 (C)$ is an irreducible curve.
\end{example}


\begin{thebibliography}{0000000}
\bibitem[AH81]{ArH} E. Arbarello and J. Harris, \emph{Canonical curves and quadrics of rank 4}, Compositio Math. 43 (1981), no. 2, 145--179.

\bibitem[Ar09]{Ar} I. V. Arzhantsev, \emph{On the factoriality of Cox rings}, Mat. Zametki 85 (2009), no. 5, 643–-651; translation in Math. Notes 85 (2009), no. 5-6, 623–-629

\bibitem[BH03]{BH} F. Berchtold and J. Hausen, \emph{Homogeneous coordinates for algebraic varieties}, J. Algebra 266 (2), (2003) 636-–670.

\bibitem[B08]{B} A. Bernardi, \emph{Ideals of varieties parameterized by certain symmetric tensors}, J. Pure Appl. Algebra 212 (2008), 1542--1559.

\bibitem[Eis05]{E} D. Eisenbud, \emph{The Geometry of Syzygies, A second course in Commutative Algebra and Algebraic Geometry}, Springer Verlag, New York, 2005.

\bibitem[EKS88]{EKS} D. Eisenbud, J. Koh, and M. Stillman, \emph{Determinantal equations for curves of high degree}, Amer. J. Math. 110 (1988), 513--539.

\bibitem[EL93]{EL} L. Ein and R. Lazarsfeld, \emph{Syzygies and Koszul cohomology of smooth projective varieties of arbitrary dimension}, Invent. Math. 111 (1993), 51--67.

\bibitem[EKW04]{EKW}  J. Elizondo, K. Kurano, and K. Watanabe, \emph{The total coordinate ring of a normal projective variety}, J. Algebra 276 (2), (2004) 625-–637.

\bibitem[GP99]{GP} F. J. Gallego, B. P. Purnaprajna, \emph{Projective normality and syzygies of algebraic surfaces}, J. Reine Angew. Math. 506 (1999), 145--180.

\bibitem[Gr84a]{Gre1} M. L. Green, \emph{Koszul cohomology and the geometry of projective varieties I}, J. Differential Geom, 19 (1984), 125--171.

\bibitem[Gr84b]{Gre2} M. L. Green, \emph{Koszul cohomology and the geometry of projective varieties II}, J. Differential Geom, 20 (1984), 279--289.

\bibitem[Gr84]{Gre} M. L. Green, \emph{Quadrics of rank four in the ideal of a canonical curve}, Invent. Math. 75 (1984), no. 1, 85--104.

\bibitem[GL88]{GL} M. L. Green and R. Lazarsfeld, \emph{Some results on the syzygies of finite sets and algebraic curves}, Compositio Math., 67 (1988), 301-314.

\bibitem[Ha02]{Ha} H.T. H\`a, \emph{Box-shaped matrices and the defining ideal of certain blow up surface}, J. Pure Appl. Algebra 167 (2002), 203--224.

\bibitem[HLMP21]{HLMP} K. Han, W. Lee, H. Moon and E. Park, \emph{Rank $3$ quadratic generators of Veronese embeddings}, Compositio Math. 157 (2021), 2001--2025.

\bibitem[Harr92]{Harris} J. Harris, \emph{Algebraic Geometry. A First Course}, no. 133, Springer-Velag New York (1992)

\bibitem[Hart77]{H} R. Hartshorne, {\em Algebraic Geometry}, no. 52, Springer-Velag New York, (1977)

\bibitem[Ina97]{Ina} S. P. Inamdar, \emph{On syzygies of projective varieties}, Pacific J. Math., 177 (1997), no. 1, 71-75.

\bibitem[MP23]{MP} H. Moon and E. Park, \emph{Rank $3$ quadratic generators of Veronese embeddings}, Mediterr. J. Math. 20, 260 (2023).

\bibitem[Muk]{Muk} S. Mukai, \emph{An Introduction to Invariants and Moduli}, Cambridge studies in advanced mathematics 81 (2003).

\bibitem[Pa22a]{PaSMD} E. Park, \emph{On surfaces of minimal degree in $\P^5$}, J. Symbolic Comput., 109 (2022), 116--123.

\bibitem[Pa22b]{PaCurve} E. Park, \emph{On the rank of quadratic equations for curves of high degree}, Mediterr. J. Math. 19, 244 (2022).

\bibitem[PaSh]{PaShim} E. Park and S. Shim, \emph{On the locus of rank $3$ quadratic equations of a rational normal curve}, in preparation.

\bibitem[Pe1923]{Pe} K. Petri, \emph{$\ddot{U}$ber die invariante Darstellung algebraischer Funktionen}, Math. Ann. 88 (1923), 243--289.

\bibitem[Puc98]{Pu} M. Pucci, \emph{The Veronese variety and Catalecticant matrices}, J. Algebra 202 (1998), 72--95.

\bibitem[St.D72]{SD} B. Saint-Donat, \emph{Sur les $\grave{e}$quations d$\acute{e}$finisant une courbe algebrique}, C.R. Acad. Sci. Paris, Ser. A 274 (1972), 324--327.

\bibitem[St.D73]{SD2} B. Saint-Donat, \emph{On Petri's analysis of the linear system of quadrics through a canonical curve}, Math. Ann. 206 (1973), 157 --175.

\bibitem[SS11]{SS} J. Sidman and G. Smith, \emph{Linear determinantal equations for all projective schemes}, Algebra and Number Theory, Vol. 5 (2011), no. 8, 1041--1061.
\end{thebibliography}
\end{document}